\documentclass[a4paper,12pt,makeidx]{article}
\usepackage{graphicx} 
\usepackage[utf8]{inputenc}
\usepackage{amsmath,amssymb}
\usepackage{mathrsfs}
\usepackage{tcolorbox}
\usepackage{amsthm}
\usepackage{chngcntr}
\counterwithout{figure}{section}

\newtheorem{theo}{Theorem}

\newtheorem{claim}{Claim}
\newtheorem{lemma}{Lemma}

\title{Checking the admissibility of odd-vertex pairings is hard}
\author{Florian H\"{o}rsch}

\begin{document}

\maketitle

\begin{abstract}
Nash-Williams proved that every graph has a well-balanced orientation. A key ingredient in his proof is admissible odd-vertex pairings. We show that for two slightly different definitions of admissible odd-vertex pairings, deciding whether a given odd-vertex pairing is admissible is co-NP-complete. This resolves a question of Frank. We also show that deciding whether a given graph has an orientation that satisfies arbitrary local arc-connectivity requirements is NP-complete.
\end{abstract}

\section{Introduction}
This article proves some negative results which are related to the strong orientation theorem of Nash-Williams.
\medskip

Our graphs are undirected unless specified otherwise. Let $G=(V,E)$ be a graph. For some disjoint $X,Y \subseteq V$, we use $d_G(X,Y)$ for the number of edges that are incident to one vertex in $X$ and one vertex in $Y$. We use $d_G(X)$ for $d_G(X,V-X)$. For some integer $k$,  we say that $X$ is {\it $k$-edge-connected} if $d_G(X)\geq k$ for all nonempty $X \subset V$. We abbreviate $1$-edge-connected to {\it connected}. A {\it connected component} of $G$ is a maximal connected subgraph. For some subgraph $G'$ of $G$, we denote by $G'[X]$ the subgraph of $G'$ induced by $X \cap V(G')$. For a single vertex $v$, we use $d_G(v)$ for $d_G(\{v\})$ and call this number the {\it degree} of $v$. We call $G$ {\it eulerian} if the degree of every vertex in $V$ is even. For $s,t \in V$ and $X \subseteq V$, we say that $X$ is an {\it $s\bar{t}$-set} if $s \in X$ and $t \in V-X$. We use $\lambda_G(s,t)$ for the minimum of $d_G(X)$ over all $s\bar{t}$-sets $X$. By the undirected edge version of Menger's theorem \cite{m}, this is the same as the maximum size of a set of edge-disjoint $st$-paths in $G$. For some nonempty $X \subset V$, we use $R_G(X)$ for $\max\{2\lfloor\frac{\lambda_G(s,t)}{2}
\rfloor:X\text{ is an }s\bar{t}\text{-set}\}$. We define $R_G(\emptyset)=R_G(V)=0$. For two graphs $G_1=(V,E_1)$ and $G_2=(V,E_2)$ on the same vertex set $V$, we use $G_1+G_2$ for $(V,E_1\cup E_2)$.

Let $D=(V,A)$ be a directed graph. For some $X \subseteq V$, we use  $d_D^-(X)$ for the number of arcs in $A$ entering $X$ and  $d_D^+(X)$ for $d_D^-(V-X)$. For a single vertex $v$, we use $d^-_D(v)$ and $d^+_D(v)$ for $d_D^-(\{v\})$ and $d_D^+(\{v\})$, respectively. We call $D$ {\it eulerian} if $d^-_D(v)=d_D^+(v)$ for all $v \in V$. For $s,t \in V$, we use  $\lambda_D(s,t)$ for the minimum of $d_D^+(X)$ over all $s\bar{t}$-sets $X$. By the directed arc version of Menger's theorem \cite{m}, this is the same as the maximum size of a set of arc-disjoint $st$-paths in $D$.  For two directed graphs $D_1=(V,A_1)$ and $D_2=(V,A_2)$ on the same vertex set $V$, we use $D_1+D_2$ for $(V,A_1\cup A_2)$. A directed graph $\vec{G}$ that is obtained from a graph $G=(V,E)$ by choosing an orientation for each of its edges is called an {\it orientation} of $G$. The orientation $\vec{G}$ is called {\it well-balanced} if $\lambda_{\vec{G}}(s,t)\geq \lfloor\frac{\lambda_G(s,t)}{2}\rfloor$ for all $s,t \in V$.
\medskip

In 1960, Nash-Williams proved the following celebrated theorem on well-balanced orientations \cite{NW}. 
\begin{theo}\label{well}
Every graph has a well-balanced orientation.
\end{theo}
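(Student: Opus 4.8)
The plan is to reduce the statement to the eulerian case together with the existence of a suitable pairing of the odd-degree vertices. Write $T$ for the set of vertices of odd degree in $G$; since the degree sum is even, $|T|$ is even. I would call a set $M$ of new edges matching up $T$ into $\frac{|T|}{2}$ pairs an \emph{odd-vertex pairing}, so that $G+M$ is eulerian, and call $M$ \emph{admissible} if $d_M(X)\le d_G(X)-R_G(X)$ holds for every $X\subseteq V$. The proof then splits into two claims: (A) every graph admits an admissible pairing, and (B) if $M$ is admissible, then any eulerian orientation of $G+M$ restricts to a well-balanced orientation of $G$. Granting both, Theorem~\ref{well} follows at once, since $G+M$ is eulerian and hence has an eulerian orientation.

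Claim (B) is the routine half, and I would dispatch it by a direct cut computation. Fix $s,t\in V$ and an $s\bar{t}$-set $X$, let $\vec{G'}$ be an eulerian orientation of $G'=G+M$, and let $\vec{G}$ be its restriction to the edges of $G$. Since eulerian orientations satisfy $d^+_{\vec{G'}}(X)=\frac{1}{2}d_{G'}(X)=\frac{1}{2}(d_G(X)+d_M(X))$, and since deleting the oriented pairing edges removes at most $d_M(X)$ units from the out-degree of $X$, I get
\[
d^+_{\vec{G}}(X)\ \ge\ \frac{d_G(X)+d_M(X)}{2}-d_M(X)\ =\ \frac{d_G(X)-d_M(X)}{2}\ \ge\ \frac{R_G(X)}{2},
\]
where the last inequality is admissibility. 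As $R_G(X)\ge 2\lfloor\frac{\lambda_G(s,t)}{2}\rfloor$ whenever $X$ is an $s\bar{t}$-set, taking the minimum over all such $X$ yields $\lambda_{\vec{G}}(s,t)\ge\lfloor\frac{\lambda_G(s,t)}{2}\rfloor$, as required. Along the way one checks $R_G(X)\le d_G(X)$, which in particular makes the empty pairing admissible when $T=\emptyset$, settling the eulerian case.

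Claim (A) is the heart of the matter and where I expect the real difficulty. I would build $M$ greedily, maintaining the invariant that the current partial pairing $M_0$ on $T$ satisfies $d_{M_0}(X)\le d_G(X)-R_G(X)$ for all $X$, and at each step adding one edge between two still-unmatched vertices of $T$. Adding an edge $st$ raises $d_{M_0}$ by one exactly on the sets separating $s$ from $t$, so the step preserves the invariant precisely when no \emph{tight} set (one with $d_{M_0}(X)=d_G(X)-R_G(X)$) separates $s$ from $t$. The crux is to show that a safe pair of unmatched vertices always exists. For this I would first establish the structural properties of $R_G$ — that it is symmetric, even-valued, and obeys a crossing (skew-)supermodular inequality — and combine them with the submodularity of $d_G$ to uncross the tight sets into a well-structured, essentially laminar family. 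A parity argument is then decisive: using that $R_G$ is even and $d_G(X)\equiv|T\cap X|\pmod 2$, one finds that every tight set contains an even number of unmatched vertices of $T$, and this parity constraint is what rules out the bad case in which every pair of unmatched vertices is separated by some tight set. Making the uncrossing and the parity bookkeeping cooperate is the main obstacle, and it is exactly the delicate combinatorial core that the notion of admissible pairing is introduced to control.
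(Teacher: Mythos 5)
Your overall architecture is exactly the one the paper attributes to Nash-Williams: pair up the odd-degree vertices by a pairing $M$ satisfying the cut condition $d_M(X)\le d_G(X)-R_G(X)$ (the paper's ``cut-admissible'' pairing), and show that any eulerian orientation of $G+M$ restricts to a well-balanced orientation of $G$. Your Claim (B) is correct and is essentially the same computation as the first implication in the paper's Lemma~\ref{equiv}: from $d^+_{\vec{G'}}(X)=\frac{1}{2}\bigl(d_G(X)+d_M(X)\bigr)$ and admissibility one gets $d^+_{\vec{G}}(X)\ge\frac{1}{2}R_G(X)\ge\lfloor\frac{\lambda_G(s,t)}{2}\rfloor$ for every $s\bar{t}$-set $X$, and the observation $R_G(X)\le d_G(X)$ correctly disposes of the eulerian case.

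The genuine gap is Claim (A), which is not a proof but a plan, and the plan as stated would not go through. You never establish the crux you yourself identify: that at every stage of the greedy process some unmatched pair of odd vertices is separated by no tight set. The uncrossing step is the concrete obstruction: $R_G$ is only \emph{skew}-supermodular, i.e.\ $R_G(X)+R_G(Y)\le\max\bigl\{R_G(X\cap Y)+R_G(X\cup Y),\,R_G(X-Y)+R_G(Y-X)\bigr\}$, so when two tight sets cross, submodularity of $d_G$ forces tightness only along whichever of the two branches the maximum selects, and one cannot in general replace the tight family by a laminar one --- the family of tight sets is a ``crossing''-type family at best, and the parity count $d_G(X)\equiv|T\cap X|\pmod 2$ alone does not rule out the configuration in which every unmatched pair is separated. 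Moreover, a one-edge-at-a-time greedy argument has no mechanism preventing the tight sets created by earlier choices from eventually separating \emph{all} remaining unmatched vertices pairwise; Nash-Williams' actual construction is global (the pairing is built from a carefully chosen structure on the whole vertex set, not by local augmentation), and --- as the paper stresses, this being the motivation for its hardness results --- every known proof of the pairing theorem is quite involved. So what you have is a correct reduction of Theorem~\ref{well} to the odd-vertex pairing theorem, plus a correct proof of the easy half; the hard half, which is the entire content of Nash-Williams' theorem, remains open in your write-up.
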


The key ingredient in the proof of Theorem \ref{well} is the consideration of a new graph $F$ on $V$ such that $F$ is a perfect matching on the vertices in $V$ that are of odd degree in $G$. We call such a graph an {\it odd-vertex pairing} of $G$. Observe that if $F$ is an odd-vertex pairing of $G$, then $G+F$ is eulerian. Nash-Williams proves the existence of an odd-vertex pairing $F$ such that for every eulerian orientation $\vec{G}+\vec{F}$ of $G+F$, the restricted orientation $\vec{G}$ is a well-balanced orientation of $G$. We call an odd-vertex pairing $F$ with this property {\it orientation-admissible}. 

Actually, Nash-Williams proves the existence of an odd-vertex pairing with a somewhat stronger property: the odd-vertex pairings he finds satisfy the cut condition $d_G(X)-d_F(X)\geq R_G(X)$ for all $X \subseteq V$. We call such an odd-vertex pairing {\it cut-admissible}. It is easy to prove that every cut-admissible odd-vertex pairing is orientation-admissible. On the other hand, not every orientation-admissible odd-vertex pairing is cut-admissible. An example can be found in Figure \ref{draw1}.

 \begin{figure}[h]
        \centerline{\includegraphics[width=.2\textwidth]{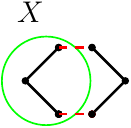}}
        \caption{The edges of $G$ are marked in solid and those of $F$ are marked in dashed. The set $X$ shows that $F$ is not cut-admissible but $F$ is trivially orientation-admissible.}
\label{draw1}
    \end{figure}

The main difficulty in the proof of Theorem \ref{well} is to show that for every graph, there is a cut-admissible odd-vertex pairing. This part of the proof is quite involved.

Kir\'aly and Szigeti use the existence of an orientation-admissible pairing to prove the existence of well-balanced orientations with some extra properties \cite{ks}. Nevertheless, most algorithmic considerations related to well-balanced orientations remain hard to deal with due to the difficulty of the proof of Theorem \ref{well}. In \cite{bikks}, Bern\'ath et al. provide a collection of negative results for questions concerning well-balanced orientations with extra properties.

This naturally raises the following question which is asked by Frank in \cite{book} as Research Problem 9.8.1. For a given odd-vertex pairing, can its admissibility properties be checked efficiently? The purpose of this work is to give a negative answer to this question. More formally, we consider the following two problems:

\bigskip
\noindent  CUT-ADMISSIBILITY ({\bf CA}):

\smallskip
\noindent Instance: A graph $G$ and an odd-vertex pairing $F$ of $G$.

\smallskip
\noindent Question: Is $F$ cut-admissible in $G$?

\bigskip
\noindent  ORIENTATION-ADMISSIBILITY ({\bf OA}):

\smallskip
\noindent Instance: A graph $G$ and an odd-vertex pairing $F$ of $G$.

\smallskip
\noindent Question: Is $F$ orientation-admissible in $G$?
\bigskip

While it is not clear whether CA and OA are in $NP$, they can easily be seen to be in co-NP. As our main results, we prove the following two theorems.

\begin{theo}\label{ca}
CA is co-NP-complete.
\end{theo}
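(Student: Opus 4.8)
The plan is to first confirm membership in co-NP and then to establish co-NP-hardness by a reduction from \textsc{Max-Cut}. For membership, observe that any nonempty proper $X\subseteq V$ with $d_G(X)-d_F(X)<R_G(X)$ is a polynomial-size certificate that $F$ is \emph{not} cut-admissible: given $X$, the quantities $d_G(X)$ and $d_F(X)$ are read off directly, while $R_G(X)$ is computed by evaluating $\lambda_G(s,t)$ (a single max-flow computation) for each of the at most $|V|^2$ pairs with $s\in X$, $t\in V-X$, and taking the maximum of $2\lfloor\lambda_G(s,t)/2\rfloor$. Hence CA $\in$ co-NP, and it suffices to show that deciding the \emph{existence} of a violating set $X$ is NP-hard.

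The engine of the reduction is the following observation. Since $d_{G+F}(X)=d_G(X)+d_F(X)$ and $G+F$ is eulerian, the quantity $d_G(X)-d_F(X)$ is always even; moreover $d_G(X)\geq\lambda_G(s,t)\geq R_G(X)$ for the pair realising $R_G(X)$. Thus a violation $d_G(X)-d_F(X)<R_G(X)$ forces $d_F(X)$ to exceed the \emph{slack} $d_G(X)-R_G(X)$, i.e.\ it rewards cuts that cross many matching edges while crossing few graph edges beyond the connectivity threshold. Minimising $d_G(X)-d_F(X)$ therefore behaves like a minimum-cut problem with negative weights on the edges of $F$, which is exactly the combinatorial source of \textsc{Max-Cut}-hardness: $F$ supplies the reward for separating endpoints and $G$ supplies the conflict structure that prevents the reward from being collected for free.

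Concretely, starting from a \textsc{Max-Cut} instance $(H,c)$ with $H=(V_H,E_H)$, I would build $G$ as follows. Each vertex $v\in V_H$ is replaced by a rigid, internally highly connected \emph{cloud} $C_v$ carrying one port per edge of $H$ at $v$, designed so that in any cut of small deficiency all of $C_v$ lies on a single side. For each edge $e=uv\in E_H$ I add one edge of $F$ joining a port of $C_u$ to a port of $C_v$; these are the only edges between distinct clouds, so for any cloud-respecting set $X$ one has $d_F(X)=d_H(S)$, where $S=\{v\in V_H:C_v\subseteq X\}$. Finally I attach a connectivity \emph{frame} whose sole purpose is to fix the thresholds: it must raise $\lambda_G(s,t)$ to a common even value $2k$ for the relevant pairs (so that $R_G(X)=2k$), keep every pairwise connectivity at most $2k+1$ (so that $R_G(X)$ never exceeds $2k$), and contribute a \emph{constant} amount to $d_G(X)$ on all cloud-respecting cuts. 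Calibrating this constant so that $d_G(X)-d_F(X)$ equals $2k+(c-2)-d_H(S)$ on such cuts (modulo a parity correction folded into the frame) makes the violation condition $d_G(X)-d_F(X)<2k$ equivalent to $d_H(S)\geq c$, so that $F$ fails to be cut-admissible if and only if $H$ has a cut of size at least $c$.

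The main obstacle is the design and analysis of the frame, because $R_G(X)$ must be controlled for \emph{every} set $X$, not merely for the cloud-respecting ones. I expect the bulk of the work to lie in two coupled tasks: first, exhibiting a frame that simultaneously achieves $2k$-edge-connectivity, the cap $\lambda_G(s,t)\leq 2k+1$, and a bipartition-independent boundary size, which is delicate precisely because raising connectivity to the terminals tends to inflate $\lambda_G(s,t)$; and second, ruling out \emph{spurious} violating cuts---sets that split some cloud or cross the frame non-minimally---by showing that any such set pays enough extra graph edges that $d_G(X)-R_G(X)\geq d_F(X)$ holds regardless. Tracking the evenness of $d_G(X)-d_F(X)$ and keeping $F$ a perfect matching on exactly the odd-degree vertices throughout these adjustments is the fiddly but, I believe, routine part.
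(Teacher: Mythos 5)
Your co-NP membership argument is fine and matches the paper's (a violating set $X$ is the certificate; $R_G(X)$ is evaluated by at most $|V|^2$ max-flow computations), and your reduction architecture is in fact the one the paper uses: your ``clouds'' are its augmented $(\alpha,\beta)$-grids $W^v$, your ``frame'' is its vertices $q,t$ plus a further grid $W^s$, your $F$-edge per edge of $H$ is exactly its pairing between $B_u$ and $B_v$, and your constant-boundary calibration is its intermediate graph $G_1$, in which every $q\bar{t}$-cut containing $s$ has $d_{G_1}(X)=mn$ and $d_F$-analogue $d_H(X\cap V_H)$, so that with $M=mn-k$ the violation condition becomes $d_H(X\cap V_H)>k$. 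So the plan is sound. But the proposal stops exactly where the proof begins: you explicitly defer (``I expect the bulk of the work to lie in\dots'') both the construction of a gadget with the required properties and the exclusion of spurious violating cuts, and these two tasks \emph{are} the proof. In the paper they occupy the entire technical development: the grids are shown $3$-edge-connected with every $3$-cut isolating a terminal (Lemma \ref{34conn}), any cut whose two sides each contain a component with two terminals of one grid has boundary $>\alpha=M+m+1$ (Lemma \ref{big}), $R_{G_2}$ is computed exactly (Lemma \ref{r}: all local connectivities equal the minimum degree except $\lambda_{G_2}(q,t)=M$), and then a minimal-counterexample argument (choosing a violating $X$ with $d_{G_2}(X)$ minimum, Claim \ref{yfgyi}) forces every violating set to respect the clouds and separate $q$ from $t$. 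None of this is routine, and without it the equivalence with \textsc{Max-Cut} is only asserted.

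There is also a concrete tension in your frame specification that the missing analysis would have to resolve. You ask for clouds that are ``internally highly connected'' so that small-deficiency cuts respect them, while simultaneously capping \emph{every} pairwise connectivity at $2k+1$ so that $R_G(X)\le 2k$. But high internal connectivity forces high degrees inside clouds, which inflates $\lambda_G(s,t)$ for pairs inside a cloud and hence $R_G(X)$ for cloud-splitting cuts---precisely the quantity you need to keep small to rule out spurious violations. The paper resolves this by going the opposite way: the grid gadgets have maximum degree $4$ (so $R_{G_2}(X)\le 4$ for any cut not separating $q$ from $t$), and their rigidity is combinatorial rather than connectivity-based---rows of length $>\alpha$ make splitting two terminals expensive (Lemma \ref{big})---while the threshold $M$ is realized only for the single pair $(q,t)$. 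Finally, the parity bookkeeping you ``fold into the frame'' is handled in the paper by first reducing \textsc{Max-Cut} to the evenized variant AMAXCUT (all degrees even, $k$ and $|E_H|$ even), which is what makes $F$ a genuine odd-vertex pairing and the grid parameters well defined; this preprocessing step, or something equivalent, cannot be omitted.
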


\begin{theo}\label{oa}
OA is co-NP-complete.
\end{theo}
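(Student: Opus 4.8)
The plan is to prove the co-NP-hardness of OA by reusing the reduction that establishes Theorem~\ref{ca}, since membership of OA in co-NP has already been noted. The obstacle is that cut-admissibility is strictly stronger than orientation-admissibility: every cut-admissible pairing is orientation-admissible, but the converse fails (Figure~\ref{draw1}), so a generic co-NP-hardness reduction for CA need not be one for OA. I would therefore show that the instances $(G,F)$ produced by the proof of Theorem~\ref{ca} have the extra feature that $F$ is orientation-admissible \emph{if and only if} $F$ is cut-admissible. Granting this equivalence on the constructed instances, the very same many-one reduction witnesses co-NP-hardness of OA, and Theorem~\ref{oa} follows.

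The core of the argument is a translation of orientation-admissibility into a cut inequality. Fix an eulerian orientation $\vec{G}+\vec{F}$ of $G+F$. Because an eulerian orientation satisfies $d^+(X)=d^-(X)$ on every set $X$, we have $d^+_{\vec{G}}(X)+d^+_{\vec{F}}(X)=d^-_{\vec{G}}(X)+d^-_{\vec{F}}(X)$, and combining this with $d^+_{\vec{G}}(X)+d^-_{\vec{G}}(X)=d_G(X)$ gives
\[
d^+_{\vec{G}}(X)=\tfrac{1}{2}\bigl(d_G(X)+d^-_{\vec{F}}(X)-d^+_{\vec{F}}(X)\bigr)\ \ge\ \tfrac{1}{2}\bigl(d_G(X)-d_F(X)\bigr),
\]
with equality exactly when every $F$-edge crossing the cut leaves $X$. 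Moreover $\vec{G}$ fails to be well-balanced precisely when some nonempty $X\subsetneq V$ satisfies $d^+_{\vec{G}}(X)<\tfrac{1}{2}R_G(X)$, since $\lambda_{\vec{G}}(s,t)=\min\{d^+_{\vec{G}}(X):X\text{ an }s\bar{t}\text{-set}\}$ and the binding requirement for a fixed $X$ is $\tfrac{1}{2}R_G(X)$. Hence, if $F$ is not cut-admissible, there is a set $X$ with $d_G(X)-d_F(X)<R_G(X)$, and any eulerian orientation attaining $d^+_{\vec{F}}(X)=d_F(X)$ certifies that $F$ is not orientation-admissible; the reverse implication (cut-admissible $\Rightarrow$ orientation-admissible) is immediate from the same display.

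The main obstacle is this \emph{realizability}: I must produce an eulerian orientation of $G+F$ sending all $F$-edges across the violated cut out of $X$. Fixing those crossing edges (which are vertex-disjoint, as $F$ is a matching), extending the partial orientation to an eulerian one reduces to orienting the remaining edges so that each vertex $v$ attains a prescribed out-degree $g(v)$ determined by the fixed $F$-edges at $v$. By the standard orientation-with-prescribed-out-degrees theorem, such an orientation exists if and only if $\sum_{v\in S}g(v)\ge e(S)$ for every $S\subseteq V$, where $e(S)$ is the number of remaining edges inside $S$. The delicate part is to verify that the gadgets of the Theorem~\ref{ca} construction carry enough parallel edges and local connectivity that this inequality can never be violated, so that the extremal orientation always exists. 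Once this feasibility condition is established for the constructed instances, cut-admissibility and orientation-admissibility coincide on them, and Theorem~\ref{oa} is reduced to Theorem~\ref{ca}.
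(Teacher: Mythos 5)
Your proposal takes essentially the same route as the paper: its Lemma \ref{equiv} proves exactly your claimed equivalence of cut- and orientation-admissibility on the constructed instances, using the same eulerian-orientation computation $2d^+_{\vec{G_2}}(X)=d_{G_2}(X)+d^-_{\vec{F}}(X)-d^+_{\vec{F}}(X)$ and the same realizability step (orienting all $F$-edges crossing the violated cut out of $X$ and extending to an eulerian orientation via Theorem \ref{ff}, which is equivalent to your prescribed-out-degree condition). The one verification you defer --- that the gadgets satisfy the feasibility inequality, i.e.\ $d_{G_2}(X)\geq d_F(X)$ for all $X\subseteq V_2$ --- is precisely the paper's Lemma \ref{notbig}, which the paper likewise states without detailed proof, remarking only that it follows by arguments analogous to those of Lemma \ref{ausbeul}.
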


In the last part of this article, we consider another problem on graph orientation. Given a graph $G$, we aim to find an orientation of $G$ that meets arbitrary local arc-connectivity requirements. Formally, we consider the following problem:

\bigskip
\noindent LOCAL ARC-CONNECTIVITY ORIENTATION ({\bf LACO}):

\smallskip
\noindent Instance: A graph $G$ and a requirement function $r:V^2\rightarrow \mathbb{Z}_{\geq 0}$.

\smallskip
\noindent Question: Is there an orientation $\vec{G}$ of $G$ such that $\lambda_{\vec{G}}(u,v)\geq r(u,v)$ for all $u,v \in V^2$?

We were surprised not to find any previous work on the algorithmic tractability of this problem. By a reduction using one of the negative results in \cite{bikks}, we fill this gap.

\begin{theo}\label{loc}LACO is NP-complete.
\end{theo}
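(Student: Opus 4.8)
The plan is to establish the two directions of NP-completeness separately. For membership in NP, an orientation $\vec{G}$ itself serves as a certificate: it has size linear in $|E|$, and for each of the $O(|V|^2)$ ordered pairs $(u,v)$ one computes $\lambda_{\vec{G}}(u,v)$ by a single maximum-flow computation (recall that, by the directed arc version of Menger's theorem, $\lambda_{\vec{G}}(u,v)$ equals the maximum number of arc-disjoint $uv$-paths) and checks $\lambda_{\vec{G}}(u,v)\geq r(u,v)$. Since max-flow is polynomial and there are polynomially many pairs, the whole verification runs in polynomial time, so LACO is in NP.

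For NP-hardness I would reduce from the negative result of \cite{bikks}. That result asserts the NP-hardness of deciding whether a graph admits an orientation meeting a prescribed well-balancedness or arc-connectivity condition that is more restrictive than what Theorem \ref{well} guarantees (for instance, a well-balanced orientation that additionally attains a higher connectivity value on one designated pair, or that respects prescribed degree bounds). The observation driving the reduction is that every condition of the form $\lambda_{\vec{G}}(u,v)\geq k_{uv}$ is, by definition, exactly a local arc-connectivity requirement. Thus, given a hard instance of the \cite{bikks} problem, I would build a LACO instance $(G',r')$ in which all the connectivity-type demands of the original problem are copied directly into $r'$, and any remaining non-connectivity constraints (such as in-degree prescriptions, which one converts into out-degree bounds via $d^-_{\vec{G}}(v)+d^+_{\vec{G}}(v)=d_{G}(v)$) are enforced by attaching small gadgets, for example a fresh sink vertex joined to $v$ by the appropriate number of parallel edges together with a demand forcing that many arc-disjoint paths out of $v$. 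One then argues that $G'$ has an $r'$-satisfying orientation if and only if the original instance is a yes-instance, and that $(G',r')$ is constructed in polynomial time.

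The main obstacle is the faithful translation of any constraint in the \cite{bikks} problem that is not itself a pairwise connectivity bound into local arc-connectivity requirements, together with a proof that the attached gadgets neither create spurious feasible orientations nor destroy feasible ones. Concretely, I must verify that the gadgets leave the relevant $\lambda$-values on the original vertices unchanged, that any orientation of $G'$ restricts to an orientation of the original graph satisfying the \cite{bikks} condition, and conversely that every such orientation extends to all of $G'$. If the chosen \cite{bikks} result is already phrased purely as lower bounds $\lambda_{\vec{G}}(s_i,t_i)\geq k_i$ on designated pairs, this obstacle essentially disappears and the reduction is close to the identity, setting $r'$ to the given bounds and $0$ elsewhere. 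The conceptual point that Theorem \ref{loc} records is precisely that such an arbitrary requirement function $r$ need not arise from the submodular structure underlying Theorem \ref{well}, which is exactly what places LACO beyond the reach of Nash-Williams-type orientation theory and makes it intractable.
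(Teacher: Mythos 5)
Your NP-membership argument is fine (a max-flow computation for each of the $O(|V'|^2)$ pairs is the standard certificate check; the paper leaves this implicit), and you have correctly located the source of hardness: the paper does reduce from the BWBO result of \cite{bikks} (well-balanced orientation with lower bounds $l^+,l^-$ on out- and in-degrees), and the well-balancedness constraints are indeed copied directly into $r$ as $r(u,v)=\lfloor\lambda_G(u,v)/2\rfloor$. But your gadget for the degree bounds has a genuine gap, and it sits exactly where the paper's construction does its real work. A ``fresh sink vertex $y_v$ joined to $v$ by parallel edges together with a demand forcing that many arc-disjoint paths out of $v$'' cannot enforce $d^+_{\vec{G}}(v)\geq l^+(v)$: every path ending at a pendant vertex $y_v$ consists of a single $v\rightarrow y_v$ arc, so a demand $r(v,y_v)$ only forces the gadget edges themselves to point toward $y_v$ and says nothing about how the edges of $G$ at $v$ are oriented. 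Moreover, a vertex is not a sink by declaration; the orientation is chosen adversarially, so you need some demand that forces the gadget edges inward, and if you instead try to repair the gadget by demanding connectivity from $v$ through $G$-edges to a farther target, the min-cut condition then constrains cuts deep inside $G$ and entangles the gadget with the structure of $G$ --- precisely the ``spurious constraints'' problem you flag as the main obstacle but do not resolve.

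The paper resolves both difficulties with one global trick rather than per-vertex gadgets. It adds a single pair $x,y$, joins every $v\in V$ to each of $x$ and $y$ by exactly $d_G(v)$ parallel edges, and sets $r(x,y)=2|E|=d_{G'}(x)=d_{G'}(y)$; this saturation forces $x$ to be a source and $y$ a sink in any feasible orientation. Consequently (i) no $u$--$v$ path with $u,v\in V$ can pass through $x$ or $y$, so $\lambda_{\vec{G'}}(u,v)=\lambda_{\vec{G}}(u,v)$ and the copied well-balancedness demands are faithful, and (ii) the demand $r(v,y)=d_G(v)+l^+(v)$ bites on the singleton cut at $v$: all $d_G(v)$ edges between $v$ and $y$ leave $v$ and all $x$--$v$ edges enter it, so $\lambda_{\vec{G'}}(v,y)\leq d_G(v)+d^+_{\vec{G}}(v)$, forcing $d^+_{\vec{G}}(v)\geq l^+(v)$ (symmetrically, $r(x,v)=d_G(v)+l^-(v)$ forces the in-degree bound). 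The converse direction works because the multiplicity $d_G(u)$ on the $u$--$y$ edges lets the $l^+(v)$ surplus paths from $v$ route through each out-neighbour $u$ of $v$ and then jump to $y$, so no cut other than the singleton is tight. Your fallback remark --- that the reduction is ``close to the identity'' if the \cite{bikks} result were phrased purely as pairwise bounds $\lambda_{\vec{G}}(s_i,t_i)\geq k_i$ --- does not apply: no such result is invoked, BWBO genuinely mixes connectivity with degree constraints, and translating the degree constraints into local arc-connectivity requirements is the entire content of the proof.
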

While the proof of Theorems \ref{ca} and \ref{oa} is slightly involved, the proof of Theorem \ref{loc} is quite simple.
\medskip

In Section \ref{prem}, we give some preparatory results for the proof of Theorems \ref{ca} and \ref{oa}. In Section \ref{red}, we give a reduction that serves as a proof for both Theorem \ref{ca} and Theorem \ref{oa}. Finally, in Section \ref{local}, we prove Theorem \ref{loc}.

\section{Preliminaries}\label{prem}
In this section, we collect some preliminary results we need in our reduction.

\subsection{A modified MAXCUT problem}

The unweighted MAXCUT problem can be formulated as follows:

\bigskip
\noindent {\bf MAXCUT}:

\smallskip
\noindent Instance: A graph $H=(V,E)$ and a positive integer $k$.

\smallskip
\noindent Question: Is there some $X \subseteq V$ such that $d_H(X) > k$?
\bigskip

 A proof of the following theorem can be found in \cite{gj}.

\begin{theo}
MAXCUT is NP-hard.
\end{theo}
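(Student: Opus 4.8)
The plan is to give a polynomial reduction from the \textsc{Not-All-Equal 3-SAT} problem (\textsc{NAE-3SAT}), which is well known to be NP-complete: given a $3$-CNF formula, decide whether there is a truth assignment that, in every clause, satisfies at least one literal and falsifies at least one literal. (Its NP-completeness can itself be obtained from ordinary $3$-SAT by a standard gadget reduction, or simply taken as known.) The reduction exploits the elementary fact that in any cut $(X,V-X)$ of a graph, a triangle contributes either $0$ or $2$ of its edges to $d_H(X)$, and contributes $2$ precisely when its three vertices do not all lie on the same side of the cut --- exactly the not-all-equal condition.

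Given an instance of \textsc{NAE-3SAT} with variables $x_1,\dots,x_n$ and clauses $C_1,\dots,C_m$, I would build a (multi)graph $H$ as follows. For each variable $x_i$ introduce two vertices $v_i$ and $\bar v_i$ joined by $N$ parallel edges, where $N:=2m+1$; here $v_i$ will represent the literal $x_i$ and $\bar v_i$ the literal $\bar x_i$. For each clause $C_j$ with literals $\ell_{j1},\ell_{j2},\ell_{j3}$, add the three edges of a triangle on the corresponding literal-vertices. Finally set $k:=nN+2m-1$. This construction is clearly computable in polynomial time.

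For correctness, first note that interpreting a cut $(X,V-X)$ as the assignment ``$x_i$ is true iff $v_i\in X$'' makes sense only when $X$ separates $v_i$ from $\bar v_i$ for every $i$; call such a cut \emph{consistent}. For a consistent cut the $N$ parallel edges of every variable are cut, contributing $nN$, and each clause triangle contributes $2$ iff $C_j$ is not-all-equal satisfied, so $d_H(X)=nN+2\cdot(\text{number of satisfied clauses})$. Hence if the formula is satisfiable there is a cut with $d_H(X)=nN+2m>k$. Conversely, I would argue that no cut can beat a best consistent one: a variable whose two vertices lie on the same side forfeits all $N=2m+1$ of its parallel edges, while the triangles touching those two vertices can gain at most $2$ per clause, hence at most $2m$ in total; since $N>2m$, flipping such a vertex to restore consistency never decreases $d_H(X)$. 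Thus the maximum of $d_H(X)$ is attained at a consistent cut and equals $nN+2m$ if and only if the formula is NAE-satisfiable, so $d_H(X)>k$ for some $X$ exactly when the instance is a yes-instance.

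The main obstacle is the forcing argument in this last step: one must choose $N$ large enough that sacrificing consistency can never be compensated by the clause triangles, which requires bounding the triangle contribution of a single variable by the number of clauses containing it. A minor point to dispatch is clauses repeating a variable, so that the ``triangle'' degenerates; these can be forbidden by preprocessing or handled by a short separate case analysis. If one wants a fully self-contained argument rather than invoking the NP-completeness of \textsc{NAE-3SAT}, the remaining task is to supply that reduction from $3$-SAT, which is routine but not completely trivial.
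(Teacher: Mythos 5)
Your proof is correct, but it takes a different route from the paper for a simple reason: the paper does not prove this theorem at all --- it states it as known and cites Garey and Johnson \cite{gj} for a proof. You instead supply the standard self-contained reduction from \textsc{NAE-3SAT}, and it is sound: a triangle contributes $0$ or $2$ edges to any cut, contributing $2$ exactly in the not-all-equal case; with $N=2m+1$ parallel edges per variable pair and $k=nN+2m-1$, any cut of value exceeding $k$ must be consistent (an inconsistent pair costs $N>2m$, which the triangles can never repay) and must cut all $m$ triangles, giving the desired equivalence. Two remarks on the comparison. First, your reduction produces a multigraph; in Garey--Johnson the problem is stated for simple graphs, but within this paper's conventions that is harmless, since the paper's own reduction from MAXCUT to AMAXCUT immediately introduces parallel edges, so ``graph'' here clearly includes multigraphs (and in any case the counting argument, as opposed to your local flipping argument, already works with $N=1$, since a cut with $p\geq 1$ inconsistent pairs has value at most $(n-p)N+2m\leq k$). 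Second, you correctly flag the degenerate clauses: a clause with a repeated literal produces a loop, which never crosses a cut, plus a doubled edge that still contributes $2$ exactly in the not-all-equal case, so your ``preprocess or short case analysis'' disclaimer is accurate rather than a gap. What your approach buys is self-containment modulo the NP-completeness of \textsc{NAE-3SAT}; what the paper's citation buys is brevity, which is appropriate since the result is classical and plays only the role of a starting point for the AMAXCUT reduction in Lemma \ref{amax}.
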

For our reduction in Section \ref{red}, we need a slightly adapted version of MAXCUT.

\bigskip
\noindent  ADAPTED MAXCUT({\bf AMAXCUT}):

\smallskip
\noindent Instance: A graph $H=(V,E)$ such that $|E|\geq 6$ is even and $d_H(v)$ is even for every $v \in V$ and an even integer $k$.

\smallskip
\noindent Question: Is there some $X \subseteq V$ such that $d_H(X) > k$?

\begin{lemma}\label{amax}
AMAXCUT is NP-hard.
\end{lemma}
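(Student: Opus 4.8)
The plan is to reduce the general MAXCUT problem to AMAXCUT. Given an instance $(H,k)$ of MAXCUT, I would first dispose of the degenerate cases where $|E(H)|<3$: here MAXCUT is decidable in constant time, so such instances can simply be mapped to a fixed yes- or no-instance of AMAXCUT of the appropriate answer. Hence I may assume $|E(H)|\geq 3$ from now on.

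The core of the reduction is a single doubling trick. Let $H'=(V,E')$ be the graph obtained from $H$ by replacing every edge with two parallel edges, and set $k'=2k$. This construction is computable in linear time, so it suffices to verify that $(H',k')$ is a valid AMAXCUT instance and that the reduction preserves the answer. For validity, observe that doubling every edge doubles every degree, so $d_{H'}(v)=2d_H(v)$ is even for all $v\in V$, i.e.\ $H'$ has all even degrees; moreover $|E'|=2|E(H)|$ is even and, since $|E(H)|\geq 3$, satisfies $|E'|\geq 6$. Finally $k'=2k$ is even. Thus all structural constraints of AMAXCUT hold.

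For correctness, the key identity is that doubling the edges doubles every cut: $d_{H'}(X)=2d_H(X)$ for all $X\subseteq V$. Consequently $d_{H'}(X)>k'=2k$ holds if and only if $2d_H(X)>2k$, that is, if and only if $d_H(X)>k$, where I use that $d_H(X)$ is an integer so that no value is lost in the strict inequality. Therefore $(H',k')$ is a yes-instance of AMAXCUT exactly when $(H,k)$ is a yes-instance of MAXCUT, and the NP-hardness of AMAXCUT follows from that of MAXCUT.

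I do not expect a genuine obstacle here: the only point requiring care is arranging the three structural constraints of AMAXCUT — all degrees even, an even number of at least six edges, and an even threshold — to hold simultaneously without perturbing the cut structure. Edge doubling is attractive precisely because it enforces every parity condition at once while scaling each cut by the same factor of two, so the cut structure is preserved up to a harmless rescaling that is absorbed by setting $k'=2k$. The side condition $|E'|\geq 6$ is then the only thing that forces the minor case analysis for small input graphs, and this is routine.
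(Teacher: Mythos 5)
Your proof is correct and follows essentially the same route as the paper: double every edge of $H$, set $k'=2k$, and observe that all parities become even while every cut doubles, with the assumption $|E(H)|\geq 3$ (which the paper dismisses as obvious and you justify slightly more carefully) guaranteeing $|E'|\geq 6$.
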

\begin{proof}
We show this by a reduction from MAXCUT. Let $(H=(V,E),k)$ be an instance of MAXCUT. We may obviously suppose that $|E|\geq 3$. Let $H'=(V,E')$ be the graph which is obtained from $H$ by replacing every edge of $E$ by $2$ parallel copies of itself. Observe that $|E'|=2|E|\geq 6$ is even and $d_{H'}(v)=2d_H(v)$ is even for all $v \in V$. Further, for every $X \subseteq V$, we have $d_{H'}(X)=2d_H(X)$. This yields that $(H,k)$ is a positive instance of MAXCUT if and only if $(H',2k)$ is a positive instance of AMAXCUT.


\end{proof}
\subsection{Augmented $(\alpha,\beta)$-grids}
In this subsection, we introduce a class of grid-like graphs which will be used as  a gadget in our reduction. A {\it grid} is a graph on ground set $\{1,\ldots,\mu\}\times \{1,\ldots,\nu\}$ for some positive integers $\mu,\nu$ where two vertices $(i_1,j_1)$ and $(i_2,j_2)$ are adjacent if $|i_1-i_2|+|j_1-j_2|=1$. For some $i \in \{1,\ldots, \mu\}$, we call $\{(i,1),\ldots, (i,\nu)\}$ the {\it row} $i$. Similarly, for some $j \in \{1,\ldots, \nu\}$, we call $\{(1,j),\ldots, (\mu,j)\}$ the {\it column} $j$.

In order to define augmented $(\alpha,\beta)$-grids for an odd integer  $\alpha \geq 3$  and an integer $\beta \geq 2$, we first consider a grid with $\alpha \beta$ rows and $\frac{\alpha +1}{2}$ columns. Now, for some $1 \leq \gamma \leq \beta$, let $L_{\gamma}=\{ l_1,\ldots,l_{\gamma}\}=\{(\alpha,1),(2 \alpha,1),\ldots,(\gamma \alpha, 1)\}$ and $P_{\gamma}=\{p_1,\ldots,p_{\gamma}\}=\{(\alpha,\frac{\alpha +1}{2}),(2 \alpha,\frac{\alpha +1}{2}),\ldots,(\gamma \alpha, \frac{\alpha +1}{2})\}$.
 We use $L$ for $L_{\beta}$ and $P$ for $P_{\beta}$. We now create the {\it augmented  $(\alpha,\beta)$-grid} $W$ by adding an edge from $(1,j)$ to $(\alpha \beta,j)$ for all $j=1,\ldots, \frac{\alpha +1}{2}$ and by adding parallel edges in the columns $1$ and $\frac{\alpha +1}{2}$ in a way that none of them is incident to a vertex in $L \cup P$ and that every vertex in $V(W)-(L\cup P)$ has degree 4 in $W$. Observe that this is possible because both $\alpha-1$ and $\alpha +1$ are even. An example can be found in Figure \ref{draw2}.

 \begin{figure}[h]
        \centerline{\includegraphics[width=.2\textwidth]{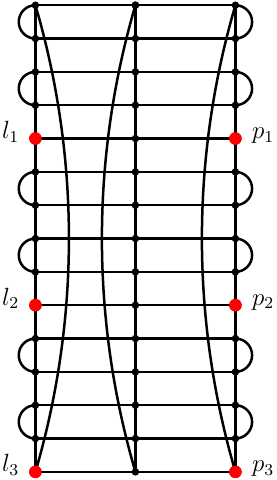}}
        \caption{An augmented $(5,3)$-grid.}\label{draw2}
    \end{figure}

 Later, when $W$ is not clear from the context, we use $L(W)$ for the set $L$ etc. We now collect some properties of augmented $(\alpha,\beta)$-grids.

\begin{lemma}\label{34conn}
Let $W=(V,E)$ be an augmented $(\alpha,\beta)$-grid for some odd integer $\alpha \geq 3$ and some integer $\beta \geq 2$. Then $W$ is $3$-edge-connected and if $d_W(X)=3$ for some nonempty $X \subset V$, then $X=\{v\}$ or $X=V-\{v\}$ for some $v\in L(W)\cup P(W)$.
\end{lemma}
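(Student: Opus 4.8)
The plan is to treat $W$ as a cylinder and analyse an arbitrary nonempty proper $X\subset V$ with $d_W(X)\le 3$ column by column. First I would record two facts that follow directly from the construction. Every vertex of $L\cup P$ has degree $3$ in $W$, while every other vertex has degree $4$: the parallel edges are added precisely to raise the degree of the remaining boundary-column vertices from $3$ to $4$, and by fiat they avoid $L\cup P$. Second, because of the added edge $(1,j)$–$(\alpha\beta,j)$, the \emph{vertical} edges inside each column $j$ contain a spanning cycle on that column's $\alpha\beta$ vertices, and the parallel edges (which lie in columns $1$ and $\frac{\alpha+1}{2}$) only duplicate some of these vertical edges. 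Writing $C_j$ for the vertex set of column $j$ and $a_j$ for the number of cut vertical edges in column $j$, it follows that $a_j\ge 2$ whenever $X$ meets $C_j$ in a proper nonempty subset, i.e. whenever column $j$ is \emph{split}. I would also use that consecutive columns are joined by exactly $\alpha\beta\ge 6$ horizontal edges, one per row, with no parallel horizontal edges.

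Since the vertical cut edges are counted by $\sum_j a_j\le d_W(X)\le 3$ and each split column contributes at least $2$, at most one column is split; and some column must be split, for otherwise every column is monochromatic, and as the columns form a path with both $X$ and $V-X$ nonempty, two consecutive columns would lie on opposite sides, forcing $\alpha\beta$ horizontal cut edges. So there is a unique split column $j^\ast$, every other column is monochromatic, and adjacent monochromatic columns must agree (else again $\alpha\beta$ cut edges); hence all columns left of $j^\ast$ share one side and all columns right of $j^\ast$ share one side. The next step is to rule out that $j^\ast$ is interior. If both neighbours $j^\ast-1$ and $j^\ast+1$ exist, the horizontal edges joining $j^\ast$ to them contribute at least $2$ cut edges when the two blocks lie on the same side, and exactly $\alpha\beta$ when they lie on opposite sides; either way this exceeds the at most $3-a_{j^\ast}\le 1$ horizontal edges we can afford. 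Thus $j^\ast\in\{1,\tfrac{\alpha+1}{2}\}$.

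By the left–right symmetry of $W$ it suffices to treat $j^\ast=1$, noting that the degree-$3$ vertices of the boundary column $1$ are exactly those of $L$ (and of column $\frac{\alpha+1}{2}$ exactly those of $P$). For $j^\ast=1$ all columns $j\ge 2$ lie on one common side, so either $X$ or $V-X$ is contained in $C_1$; say $X\subseteq C_1$, the other case being symmetric by complementation. Each vertex of $X$ then sends exactly one cut horizontal edge to column $2$, so $d_W(X)=a_1+|X|$. As $a_1\ge 2$ and $|X|\ge 1$ this gives $d_W(X)\ge 3$, with equality exactly when $|X|=1$ and $a_1=2$; but $a_1$ equals the vertical degree $d_W(v)-1$ of the single vertex $v\in X$, so $a_1=2$ forces $d_W(v)=3$, i.e. $v\in L$. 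This simultaneously shows $W$ is $3$-edge-connected and that a cut of size $3$ arising from $j^\ast=1$ is $\{v\}$ or $V-\{v\}$ for some $v\in L$; the boundary column $\frac{\alpha+1}{2}$ yields exactly the vertices of $P$.

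The step I expect to require the most care is the elimination of an interior split column, together with keeping the parallel edges correctly accounted for throughout: the crucial point is that they are vertical, so they only inflate $a_j$ and never create new horizontal cut edges. Once the split column is forced onto the boundary the remaining computation is immediate, which is why I would organise the argument to reach that reduction as quickly as possible.
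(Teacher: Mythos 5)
Your proof is correct and rests on essentially the same counting argument as the paper: each split column contributes at least $2$ cut edges because the columns are cycles, the horizontal edges (one per row between consecutive columns) contribute the rest, and the only way to stay at $3$ is to isolate a single degree-$3$ vertex of $L(W)\cup P(W)$. The paper's version is merely terser, observing that every split row contributes at least $1$ and every split column at least $2$ and concluding directly that one of $X$, $V-X$ lies in a single row and a single column, whereas you reach the same endpoint via the unique-split-column and boundary-column reduction.
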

\begin{proof}
Let $\emptyset \subset X \subset V$ such that $d_W(X)\leq3$. Observe that every row that intersects both $X$ and $V-X$ contributes at least $1$ to $d_W(X)$ and every column that intersects both $X$ and $V-X$ contributes at least $2$ to $d_W(X)$. It follows that one of $X$ or $V-X$ is contained in one row and in one column. We obtain that $|X|=1$ or $|V-X|=1$ and so the statement follows by construction. 
\end{proof}
\begin{lemma}\label{big}
Let $W=(V,E)$ be an augmented $(\alpha,\beta)$-grid for some odd integer $\alpha \geq 3$ and some integer $\beta \geq 2$. Further, let $X \subseteq V$ such that both $W[X]$ and $W[V-X]$ have a connected component containing at least two vertices of $L(W)\cup P(W)$. Then $d_W(X)>\alpha$.
\end{lemma}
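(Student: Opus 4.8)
The plan is to exploit the fact that if $C$ is a connected component of $W[X]$, then by maximality every edge leaving $C$ must enter $V-X$; hence $d_W(C)\le d_W(X)$, and the same holds for components of $W[V-X]$. So it suffices either to show that the distinguished component on one side already has boundary exceeding $\alpha$, or to combine information from both sides. Throughout I write $m=\frac{\alpha+1}{2}$ for the number of columns, and I reuse the two elementary counting facts behind Lemma \ref{34conn}: a row meeting both sides of a partition contributes at least one crossing edge (through its horizontal edges), while a column meeting both sides contributes at least two, since the added edge from $(1,j)$ to $(\alpha\beta,j)$ turns each column into a cycle.

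First I would record a structural fact: for a connected vertex set $C$, the set of rows it meets is a cyclic interval, and $C$ meets every row of that interval. This holds because every edge of $W$ changes the row index by at most one in the cyclic metric on $\mathbb{Z}/\alpha\beta\mathbb{Z}$ (the wrap edges realize the step $\alpha\beta\leftrightarrow 1$), so the image of a connected set under the row map is connected. Similarly, the set of columns met by $C$ is an interval in $\{1,\dots,m\}$, since there is no horizontal wrap; in particular a component containing both $l_i$ and $p_i$ for a common $i$ must meet every column.

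Then I would split into cases according to whether the two distinguished components meet every column. If the component $C$ (with at least two vertices of $L\cup P$) misses some column $j_0$, then its two terminals cannot be $l_i$ and $p_i$ for a common $i$, so they lie in distinct rows; these rows differ by a multiple of $\alpha$, and hence the row interval of $C$ contains at least $\alpha+1$ rows. For each such row I would walk horizontally from a vertex of $C$ toward the empty column $j_0$ and seize the edge at which the walk leaves $C$: its far endpoint is a non-$C$ neighbour of a $C$-vertex, so it lies in $V-X$, and distinct rows yield distinct such edges. This produces at least $\alpha+1$ crossing edges, giving $d_W(X)\ge d_W(C)\ge\alpha+1>\alpha$.

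In the remaining case both distinguished components meet every column. Then each of the $m$ columns contains a vertex of the $X$-side component (lying in $X$) and a vertex of the $(V-X)$-side component (lying in $V-X$), so every column meets both sides and contributes at least two crossing edges; summing over the columns gives $d_W(X)\ge 2m=\alpha+1>\alpha$. The step I expect to be most delicate is the "missing a column" case: one must check that it is precisely the absence of a column of $C$ that licenses the per-row crossing edge (a component with two terminals may still touch the far column at non-terminal rows), and that the harvested crossing edges are genuinely distinct and genuinely leave $X$. Pinning down the row-interval fact rigorously is the other point that needs care.
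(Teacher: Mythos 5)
Your proof is correct, but it follows a genuinely different route from the paper's. The paper begins with an extremal reduction: among counterexamples with $d_W(X)\leq\alpha$ it picks one minimizing the total number of connected components of $W[X]$ and $W[V-X]$, shows that deleting a component without two vertices of $L(W)\cup P(W)$ preserves the hypothesis while lowering this count, and so may assume both $W[X]$ and $W[V-X]$ are connected. Its dichotomy is then over columns of the whole partition: either every column meets both $X$ and $V-X$ (giving $2\cdot\frac{\alpha+1}{2}>\alpha$), or some column lies entirely in, say, $X$, which forces two terminals $l_{i_1},l_{i_2}$ (or the $P$-analogue) of the connected set $V-X$ into rows differing by a nonzero multiple of $\alpha$; every row met by a path between them then also meets $X$ via the monochromatic column, yielding more than $\alpha$ crossing rows. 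You dispense with the reduction entirely and work with the two distinguished components themselves, making explicit the interval structure that is only implicit in the paper (row sets of connected sets are cyclic intervals, column sets are intervals), and your dichotomy is whether both components meet every column. Your ``missing column'' case is genuinely weaker than the paper's monochromatic-column case --- $X$ minus the component may still meet the missing column --- and your first-exit-edge harvesting handles exactly this, soundly, because every boundary edge of a component of $W[X]$ crosses into $V-X$ by maximality, and the harvested horizontal edges lie in distinct rows. What each approach buys: yours is more self-contained and sidesteps verifying the minimality step (that removing a component preserves the hypothesis and strictly reduces the component count), while the paper's reduction makes the final counting cleaner, since with both sides connected a monochromatic column automatically makes every relevant row cross. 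Two cosmetic points: where you write ``its two terminals,'' a component may contain more than two vertices of $L\cup P$, but if all of them shared a row it would contain $l_i$ and $p_i$ for a common $i$ and hence meet every column, so some two lie in distinct rows; and note that your use of the cyclic distance $\min\{|i_1-i_2|,\beta-|i_1-i_2|\}\cdot\alpha$ is actually slightly more careful than the paper's stated bound $|i_1-i_2|\alpha+1$, which ignores wrap-around paths (both give the needed $\geq\alpha+1$).
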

\begin{proof}

Suppose for the sake of a contradiction that there is some $X \subseteq V$ such that both $W[X]$ and $W[V-X]$ have a connected component containing at least two vertices of $L(W)\cup P(W)$ and $d_W(X)\leq \alpha$. We choose $X$ with this property so that the total number of connected components of $W[X]$ and $W[V-X]$ is minimized. First suppose that $W[X]$ is disconnected. It follows from the assumption that $W[X]$ has a connected component $C$ such that $W[X]-C$ has a connected component containing at least two vertices in $L(W)\cup P(W)$. Let $X'=X-V(C)$. We obtain $d_W(X')\leq d_W(X) \leq \alpha$, a contradiction to the minimal choice of $X$. It follows that $W[X]$ is connected. Similarly, $W[V-X]$ is connected.

 If every column contains an element of $X$ and an element of $V-X$, each column contributes 2 to $d_W(X)$ and so $d_W(X)\geq 2\frac{\alpha +1}{2}>\alpha$. We may hence suppose by symmetry  that there is a column that is completely contained in $X$ and that there are two vertices $l_{i_1},l_{i_2}\in (V-X)\cap L$. Observe that every path from $l_{i_1}$ to $l_{i_2}$ intersects at least $|i_1-i_2|\alpha+1>\alpha$ rows. Each of these rows contributes 1 to $d_W(X)$, so $d_W(X)>\alpha$. 


\end{proof}
\subsection{Eulerian orientations}
For the proof of the co-NP completeness of OA, we need the following result on eulerian orientations which can be found in \cite{ff}.
\begin{theo}\label{ff}
Let $G,F$ be graphs on the same vertex set $V$ such that $G+F$ is an eulerian graph and let $\vec{F}$ be an orientation of $F$. Then there is an orientation $\vec{G}$ of $G$ such that $\vec{G}+\vec{F}$ is eulerian if and only if $d_G(X)\geq d_{\vec{F}}^+(X)-d_{\vec{F}}^-(X)$ for all $X \subseteq V$.
\end{theo}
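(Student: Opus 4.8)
The plan is to prove the two implications separately, with the forward (necessity) direction being routine and the backward (sufficiency) direction carrying all the weight. For necessity, suppose an orientation $\vec G$ with $\vec G+\vec F$ eulerian exists. For any $X\subseteq V$, summing the vertex balance $d^+_{\vec G+\vec F}(v)=d^-_{\vec G+\vec F}(v)$ over $v\in X$ gives $d^+_{\vec G+\vec F}(X)=d^-_{\vec G+\vec F}(X)$, since arcs internal to $X$ cancel. Splitting into the $\vec G$- and $\vec F$-parts and rearranging yields $d^+_{\vec F}(X)-d^-_{\vec F}(X)=d^-_{\vec G}(X)-d^+_{\vec G}(X)\le d^-_{\vec G}(X)\le d_G(X)$, which is the claimed inequality.

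For sufficiency, I would use an augmenting-path argument driven by a potential. Fix an arbitrary orientation $\vec G$ of $G$ and set $\rho(v)=d^+_{\vec G+\vec F}(v)-d^-_{\vec G+\vec F}(v)$. The key preliminary observation is that $\rho(v)$ is always even: since $G+F$ is eulerian, $d_G(v)+d_F(v)$ is even, and $\rho(v)=d_G(v)+d_F(v)-2d^-_{\vec G+\vec F}(v)$. Among all orientations of $G$, I would choose one minimizing $\Phi(\vec G)=\sum_{v}\max(\rho(v),0)$. If $\Phi=0$, then every $\rho(v)\le 0$, but $\sum_v\rho(v)=0$, so all $\rho(v)=0$ and $\vec G+\vec F$ is eulerian, as desired; it remains to rule out $\Phi>0$.

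So suppose $\Phi>0$, and let $S=\{v:\rho(v)>0\}$ and $T=\{v:\rho(v)<0\}$, both nonempty. Let $R$ be the set of vertices reachable from $S$ by directed paths in $\vec G$. If some $t\in T$ is reachable, reversing all arcs of $\vec G$ along a directed $S$--$T$ path decreases $\rho$ by $2$ at its start and increases it by $2$ at its end, leaving all other values unchanged; because all $\rho$-values are even, this strictly decreases $\Phi$ by $2$, contradicting minimality. Hence $R\cap T=\emptyset$, and by definition of reachability no arc of $\vec G$ leaves $R$, so $d^+_{\vec G}(R)=0$ and therefore $d_G(R)=d^-_{\vec G}(R)$. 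Summing $\rho$ over $R$ gives, on one hand, $\sum_{v\in R}\rho(v)=\Phi>0$ (all vertices of $R$ have $\rho\ge 0$, those in $S$ strictly), and, on the other hand, $\sum_{v\in R}\rho(v)=d^+_{\vec F}(R)-d^-_{\vec F}(R)-d^-_{\vec G}(R)$. Combining, $d_G(R)=d^-_{\vec G}(R)<d^+_{\vec F}(R)-d^-_{\vec F}(R)$, contradicting the hypothesis applied to $X=R$.

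The main obstacle is the sufficiency direction, and within it the crucial point is the interplay in this last step: one must verify that the absence of an augmenting path forces the reachable set $R$ to be exactly the cut that violates the hypothesis, and the parity observation is what guarantees the potential strictly drops along an augmenting path rather than merely failing to increase. As an alternative route, one could instead reduce to the classical theorem on the existence of an orientation of $G$ with prescribed out-degrees $m^+(v)=\tfrac12\bigl(d_G(v)+d^-_{\vec F}(v)-d^+_{\vec F}(v)\bigr)$ (these are well-defined nonnegative integers precisely by applying the hypothesis to the singletons $\{v\}$ and their complements); a short computation shows the standard feasibility condition $\sum_{v\in X}m^+(v)\ge i_G(X)$ reduces exactly to the stated cut inequality, but this trades self-containedness for an appeal to an external orientation result.
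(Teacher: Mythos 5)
Your proof is correct in both directions. Note, however, that the paper itself contains no proof of this statement: it is quoted as a known result and attributed to Ford and Fulkerson's \emph{Flows in Networks} \cite{ff}, so there is no in-paper argument to compare against; what you have produced is a self-contained proof of the cited theorem. Your argument is the standard local-improvement one: necessity by summing the balance condition over $X$, and sufficiency by minimizing the potential $\Phi(\vec{G})=\sum_v \max(\rho(v),0)$ and either finding a directed $S$--$T$ path in $\vec{G}$ to reverse or exhibiting the reachable set $R$ as a violating cut. The details all check out: the parity observation ($\rho(v)\equiv d_{G+F}(v)\pmod 2$, which is even by the eulerian hypothesis) is genuinely needed to guarantee the strict decrease $\Phi\mapsto\Phi-2$ along a reversal, since with odd imbalances the gain of $2$ at the path's start could be offset at its end; and the computation $\sum_{v\in R}\rho(v)=d^+_{\vec{F}}(R)-d^-_{\vec{F}}(R)-d^-_{\vec{G}}(R)=\Phi>0$ together with $d^+_{\vec{G}}(R)=0$, hence $d_G(R)=d^-_{\vec{G}}(R)$, yields exactly the contradiction with the hypothesis at $X=R$. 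Your alternative route is also sound: the prescribed out-degrees $m^+(v)=\frac{1}{2}\bigl(d_G(v)+d^-_{\vec{F}}(v)-d^+_{\vec{F}}(v)\bigr)$ are nonnegative integers by the hypothesis on $\{v\}$ and $V-\{v\}$ and the eulerian parity, and since $\sum_{v\in X}d_G(v)=2i_G(X)+d_G(X)$ while the internal $\vec{F}$-arcs cancel, the feasibility condition $\sum_{v\in X}m^+(v)\geq i_G(X)$ is equivalent to $d_G(X)\geq d^+_{\vec{F}}(X)-d^-_{\vec{F}}(X)$; this is closer in spirit to how the result sits in the flow-theoretic literature, at the cost of invoking the degree-constrained orientation theorem rather than being self-contained.
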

\section{The reduction for admissibility}\label{red}

This section is dedicated to giving a reduction proving that CA and OA are co-NP-complete. In a first step, we reduce AMAXCUT to a problem which is somewhat similar to CA but has a more local cut condition. Next, we modify this construction to obtain a reduction for CA. Finally we show that the obtained instance is positive for OA if and only if it is positive for CA.
\subsection{The intermediate cut problem}
Let $(H=(V_H,E_H),k)$ be an instance of AMAXCUT. We abbreviate $|V_H|$ and $|E_H|$ to $n$ and $m$, respectively. Let $M=mn-k$. We now create a graph  $G_1=(V_1,E_1)$ with $V_1=V_H \cup \{q,s,t\}$ where  $q,s$ and $t$ are 3 new vertices. Let $E_1$ consist of $M$ edges from $q$ to $s$, $m$ edges from $s$ to every $v \in V_H$ and  $m$ edges from $t$ to every $v \in V_H$. A schematic drawing of $G_1$ can be found in Figure \ref{draw3}.

 \begin{figure}[h]
        \centerline{\includegraphics[width=.5\textwidth]{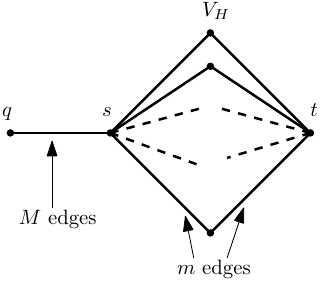}}
        \caption{A schematic drawing of $G_1$.}\label{draw3}
    \end{figure}
\begin{lemma}\label{int}
There is some $q\bar{t}$-set $X\subseteq V_1$ such that $d_{G_1}(X)-d_H(X\cap V_H)<M$ if and only if $(H,k)$ is a positive instance of AMAXCUT.
\end{lemma}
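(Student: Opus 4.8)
The plan is to compute $d_{G_1}(X)$ explicitly for an arbitrary $q\bar{t}$-set $X$ and reduce the stated inequality to the MAXCUT-type condition $d_H(S) > k$, where I abbreviate $S = X \cap V_H$. Since the only edges of $G_1$ are the $M$ parallel $qs$-edges, the $m$ parallel $sv$-edges for each $v \in V_H$, and the $m$ parallel $tv$-edges for each $v \in V_H$, the value $d_{G_1}(X)$ depends only on $S$ and on whether $s \in X$. So I would split into two cases according to the position of $s$.

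If $s \in X$, then the $qs$-edges contribute nothing, the $sv$-edges contribute $m$ for each $v \in V_H - S$, and the $tv$-edges contribute $m$ for each $v \in S$ (recall $t \notin X$), giving $d_{G_1}(X) = m(n-|S|) + m|S| = mn$ independently of $S$. Hence $d_{G_1}(X) - d_H(S) = mn - d_H(S)$, and this is strictly less than $M = mn - k$ exactly when $d_H(S) > k$.

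If instead $s \notin X$, then the $qs$-edges contribute $M$, while both the $sv$- and the $tv$-edges contribute $m$ for each $v \in S$, giving $d_{G_1}(X) = M + 2m|S|$. Thus $d_{G_1}(X) - d_H(S) = M + 2m|S| - d_H(S)$, which is less than $M$ only if $d_H(S) > 2m|S|$. The crucial observation is that $d_H(S) \le m$ always, since $H$ has only $m$ edges, whereas $2m|S| \ge 2m > m$ whenever $S \ne \emptyset$; and when $S = \emptyset$ the inequality $d_H(S) > 0$ also fails. So the case $s \notin X$ can never satisfy the stated inequality, which is precisely what rules out a spurious witness. This bounding argument is the only subtle point in the proof, but it is immediate from $m = |E_H| \ge 6 \ge 1$.

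Combining the two cases, a $q\bar{t}$-set $X$ with $d_{G_1}(X) - d_H(S) < M$ exists if and only if some $S \subseteq V_H$ satisfies $d_H(S) > k$, i.e. if and only if $(H,k)$ is a positive instance of AMAXCUT. Concretely, for one direction I take $X = \{q,s\} \cup S$ for a witnessing set $S$, and for the converse the case analysis shows that any witnessing $X$ must have $s \in X$, so that its $V_H$-part is a MAXCUT witness.
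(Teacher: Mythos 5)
Your proof is correct and follows essentially the same route as the paper: both exhibit the witness $X=\{q,s\}\cup S$ when $d_H(S)>k$, and both establish the converse by showing that a witnessing $q\bar{t}$-set must contain $s$ (the paper via its Claim that $s\in X$, you via the exact computation $d_{G_1}(X)=M+2m|S|$ together with $d_H(S)\leq m$), after which $d_{G_1}(X)=mn$ forces $d_H(X\cap V_H)>k$. Your explicit two-case computation is a mildly more uniform presentation of the paper's bounding argument, but there is no substantive difference.
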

\begin{proof}
First suppose that $(H,k)$ is a positive instance of AMAXCUT, so there is some $X \subseteq V_H$ such that $d_H(X)>k$. Let $X'=\{q,s\}\cup X$. Observe that $X'$ is a $q\bar{t}$-set and $d_{G_1}(X')=mn$. This yields $d_{G_1}(X')-d_H(X'\cap V_H)=d_{G_1}(X')-d_H(X)<M$.

Now suppose that there is some $q\bar{t}$-set $X\subseteq V_1$ such that $d_{G_1}(X)-d_H(X\cap V_H)<M$. 
\begin{claim}\label{strich}
$s \in X$.
\end{claim}
\begin{proof}
Suppose otherwise. If $X=\{q\}$, then $d_{G_1}(X)-d_H(X \cap V_H)=M-0\nless M$, a contradiction. We may hence suppose that $X$ contains some $v \in V_H$. It follows from $d_H(X\cap V_H)\leq m$ and construction that $d_{G_1}(X)-d_H(X \cap V_H)\geq d_{G_1}(q,s)+d_{G_1}(v,t)-m=M+m-m\nless M$, a contradiction.
\end{proof}
By Claim \ref{strich} and construction, we obtain $d_{G_1}(X)=mn$. This yields $d_H(X \cap V_H)>d_{G_1}(X)-M=mn-M=k$, so $(H,k)$ is a positive instance of AMAXCUT.
\end{proof}
\subsection{The main construction}
We now construct an instance $(G_2,F)$ of CA. The graph $G_2=(V_2,E_2)$ is obtained from $G_1$ by replacing all vertices in $V_1-\{q,t\}$ by certain gadgets.

For every $v \in V_H$, $G_2$ contains an augmented $(M+m+1,m+\frac{d_H(v)}{2})$-grid $W^v$. Further, $G_2$ contains an augmented $(M+m+1,M+\frac{k}{2})$-grid $W^s$. Observe that $W^v$ for all $v \in V_H$ and $W^s$ are well-defined because $m,k,M$ and $d_H(v)$ for all $v \in V_H$ are even. Let $V_2=\cup_{v \in V_H}V(W^v)\cup V(W^s)\cup\{q,t\}$. We now add an edge from $q$ to each vertex in $L_{M}(W^s)$. We next add a perfect matching between $(L(W^s)-L_{M}(W^s))\cup P(W^s)$ and $\cup_{v \in V_H}L_{m}(W^v)$. Observe that this is possible because $|(L(W^s)-L_{M}(W^s))\cup P(W^s)|=\frac{k}{2}+M+\frac{k}{2}=mn=|\cup_{v \in V_H}L_{m}(W^v)|$. Finally, we add an edge from every vertex in $\cup_{v \in V_H}P_{m}(W^v)$ to $t$. Observe that $G_1$ can be obtained from $G_2$ by contracting each $W^v$ and $W^s$ into single vertices.

We now prove an important property of $G_2$.



\begin{lemma}\label{r}
For any $\emptyset \subset X \subset V_2$, we have\\
\begin{equation*} R_{G_2}(X) =2\lfloor\frac{\min\{\max\{d_{G_2}(v):v \in X\},\max\{d_{G_2}(v):v \in V_2-X\}\}}{2}\rfloor.
\end{equation*}
\end{lemma}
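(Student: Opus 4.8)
The plan is to prove the cleaner statement that $\lambda_{G_2}(a,b)=\min\{d_{G_2}(a),d_{G_2}(b)\}$ for every pair of distinct vertices $a,b\in V_2$. Granting this, the lemma follows formally: the map $x\mapsto 2\lfloor x/2\rfloor$ is non-decreasing, so $R_{G_2}(X)$ equals $2\lfloor z/2\rfloor$, where $z=\max\{\min\{d_{G_2}(s),d_{G_2}(t)\}:s\in X,\ t\in V_2-X\}$, and a standard max-min computation gives $z=\min\{\max_{s\in X}d_{G_2}(s),\ \max_{t\in V_2-X}d_{G_2}(t)\}$, which is exactly the quantity inside the floor in the statement.

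The inequality $\lambda_{G_2}(a,b)\le\min\{d_{G_2}(a),d_{G_2}(b)\}$ is immediate, since the degree cut of either endpoint separates $a$ from $b$; everything reduces to the lower bound, which by Menger's theorem amounts to showing that no cut smaller than $\min\{d_{G_2}(a),d_{G_2}(b)\}$ separates $a$ from $b$. The key observation is that the only vertices of large degree are $q$ (of degree $M$) and $t$ (of degree $mn$), while every vertex inside a grid has degree $3$ or $4$. Since we may assume $M\ge 6$, this means $\min\{d_{G_2}(a),d_{G_2}(b)\}\le 4$ for every pair other than $\{q,t\}$.

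To handle all pairs except $\{q,t\}$, I would first classify the cuts of size at most $3$. Using $\sum_W d_W(Y\cap V(W))\le d_{G_2}(Y)$ together with the $3$-edge-connectivity of each grid (Lemma \ref{34conn}), such a cut can split at most one grid, and Lemma \ref{34conn} then forces that split to isolate a single vertex of $L\cup P$; a short cascade argument using that no external edge may then be cut shows the isolated set must be a single vertex of degree $3$. (Cuts splitting no grid project to cuts of $G_1$, which is $6$-edge-connected because any two adjacent vertices of $G_1$ are joined by many parallel edges.) Hence every cut of size at most $3$ is the degree cut of a degree-$3$ vertex, so no such cut separates two vertices both of degree at least $4$; this yields $\lambda_{G_2}(a,b)=\min\{d_{G_2}(a),d_{G_2}(b)\}$ for every pair other than $\{q,t\}$, and in particular that $G_2$ is $3$-edge-connected.

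The main obstacle is the single remaining pair $\{q,t\}$: I must show that every $q\bar{t}$-set $Y$ satisfies $d_{G_2}(Y)\ge M$, and here cuts of every size up to $M-1$ must be excluded. I would split $d_{G_2}(Y)$ into four contributions — the cut $q$--$L_M(W^s)$ edges, the cut matching edges, the cut $P_m$--$t$ edges, and the internal cuts $d_W(Y\cap V(W))$ of the grids. The heart of the matter is a per-gadget inequality $a_v\le b_v+c_v$, where $a_v=|L_m(W^v)\cap Y|$, $b_v=|P_m(W^v)\cap Y|$ and $c_v=d_{W^v}(Y\cap V(W^v))$: it says that pulling incoming matched vertices of $W^v$ into $Y$ can save matching edges only at the cost of cutting $t$-edges or paying an internal grid cut. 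I would prove it by distinguishing whether a cheap cut of $W^v$ can isolate the relevant boundary vertices, invoking Lemma \ref{big} to rule out balanced grid cuts (which cost more than $\alpha>M$) and the $3$-edge-connectivity of Lemma \ref{34conn} otherwise. Applying Lemma \ref{big} to $W^s$ as well, to bound how many vertices of $L_M(W^s)\cup P(W^s)$ can lie in $Y$, these estimates combine so that the edges saved at $q$ are always compensated, giving $d_{G_2}(Y)\ge M$. I expect establishing $a_v\le b_v+c_v$ and closing the global bookkeeping to be the most delicate part of the proof.
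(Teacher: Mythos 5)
Your reduction of the lemma to the pairwise claim $\lambda_{G_2}(a,b)=\min\{d_{G_2}(a),d_{G_2}(b)\}$ (max--min exchange plus monotonicity of $x\mapsto 2\lfloor x/2\rfloor$) is sound and is in substance how the paper organizes the proof, and your classification of cuts of size at most $3$ is exactly the content the paper compresses into its first sentence via Lemma \ref{34conn} (though your justification that $G_1$ is highly edge-connected ``because adjacent vertices are joined by many parallel edges'' is not an argument --- parallel edges imply nothing about global cuts; one checks the cuts of $G_1$ directly, and the paper only invokes $4$-edge-connectivity). The genuine divergence is at the pair $\{q,t\}$, and there you chose the hard side of Menger. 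The paper proves $\lambda_{G_2}(q,t)\ge M$ by simply exhibiting $M$ pairwise edge-disjoint $qt$-paths: for $j=1,\ldots,M$, take the edge from $q$ to $l_j(W^s)$, run along row $j(M+m+1)$ of $W^s$ to $p_j(W^s)$, cross the matching edge to some $l_{j'}(W^v)$, run along that row of $W^v$ to $p_{j'}(W^v)$, and finish with the edge to $t$; disjointness is immediate since distinct $j$ use distinct rows and distinct matching and terminal edges. This one construction replaces your entire cut bookkeeping.

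Your dual plan can in fact be completed, but not as sketched; the gap is in the $W^s$ accounting. No lemma can ``bound how many vertices of $L_M(W^s)\cup P(W^s)$ lie in $Y$'': the set $Y$ may contain all of $V(W^s)$ with zero internal cut, so that all $M$ edges at $q$ are saved, and what compensates is not Lemma \ref{big} but the matching edges leaving $Y$. Likewise Lemma \ref{big} is the wrong tool for $a_v\le b_v+c_v$: both per-gadget inequalities follow at once from the disjoint-row structure (if $l_j\in Y$, then either $p_j\in Y$ or the $l_j$--$p_j$ row path crosses the cut, and distinct rows yield distinct cut edges) --- which is the same structural fact the paper uses on the path side. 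Concretely, with $u=|L_M(W^s)\cap Y|$, $w_P=|\{j\le M: p_j(W^s)\in Y\}|$, $w=|((L(W^s)\setminus L_M(W^s))\cup P(W^s))\cap Y|$ (note $w_P\le w$), $c_s=d_{W^s}(Y\cap V(W^s))$, and $p$ the number of matching edges with both ends in $Y$, the row argument in $W^s$ gives $u\le w_P+c_s$, and then
\begin{align*}
d_{G_2}(Y)&=(M-u)+(w-p)+\Bigl(\sum_v a_v-p\Bigr)+\sum_v b_v+c_s+\sum_v c_v\\
&\ge M+(w-w_P)+\sum_v\bigl(a_v+b_v+c_v\bigr)-2p\ \ge\ M,
\end{align*}
using $p\le\sum_v a_v$ and $a_v\le b_v+c_v$ to get $2p\le\sum_v a_v+\sum_v(b_v+c_v)$. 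So your architecture closes once the $W^s$ step is replaced by its row inequality plus matching compensation; but note that the decisive input is the very row-path structure that makes the paper's direct path-packing proof a few lines long.
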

\begin{proof}
As $G_1$ is 4-edge-connected and by Lemma \ref{34conn} applied to $W^s$ and $W^v$ for all $v \in V_H$, we obtain that $\lambda_{G_2}(u,v)= \min\{d_{G_2}(u),d_{G_2}(v)\}$ for all $u,v \in V_2$ with $\{u,v\} \neq \{q,t\}$. This shows the statement for all $\emptyset \subset X \subset V_2$ such that $\{q,t\}\subseteq X$ or $\{q,t\}\subseteq V_2-X$. On the other hand, if $X$ is a $q \bar{t}$-set or a $t\bar{q}$-set, we have $\min\{\max\{d_{G_2}(v):v \in X\},\max\{d_{G_2}(v):v \in V_2-X\}\}=M$. As $M$ is even, it hence suffices to prove that $\lambda_{G_2}(q,t)=M$. 

We have $\lambda_{G_2}(q,t)\leq d_{G_2}(q)=M$. Next, there is an edge linking $q$ and $l_{i_1}(W^s)$ for all $i_1=1,\ldots ,M$ which can be concatenated to a path from $l_{i_1}(W^s)$ to $p_{i_1}(W^s)$ using only vertices of a single row of $W^s$. Now there is an edge from $p_{i_1}(W^s)$ to a vertex $l_{i_2}(W^v)$ for some $i_2\in\{1,\ldots,m\}$ and some $v \in V_H$. Finally, there is a path from  $l_{i_2}(W^v)$ to  $p_{i_2}(W^v)$ and an edge from $p_{i_2}(W^v)$ to $t$. This yields a set of $M$ edge-disjoint $qt$-paths, so $\lambda_{G_2}(q,t)\geq M$.

\end{proof}
For some $v \in V_H$, let  $B_v$ denote $(L(W^{v})-L_{m}(W^{v}))\cup (P(W^{v})-P_{m}(W^{v}))$. Now we define $F$ to be an odd-vertex pairing of $G_2$ in the following way: For every $uv \in E_H$, $F$ contains an edge linking $B_u$ and $B_v$. This is possible because for every $v \in V_H$, the set of vertices in $V(W^v)$ which are of odd degree in $G_2$ is exactly $B_v$ and $|B_v|=d_H(v)$.
\subsection{Reduction for CA}
This subsection is dedicated to proving the following lemma which gives a relation of the cut sizes in $G_1$ and $G_2$.
\begin{lemma}\label{ausbeul}
$(G_2,F)$ is a negative instance of CA if and only if there is some $q\bar{t}$-set $X\subseteq V_1$ such that $d_{G_1}(X)-d_H(X\cap V_H)<M$.
\end{lemma}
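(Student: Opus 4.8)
The plan is to phrase everything through \emph{violating sets}: $(G_2,F)$ is a negative instance of CA precisely when some $\emptyset\subset X\subset V_2$ satisfies $d_{G_2}(X)-d_F(X)<R_{G_2}(X)$. The bridge to $G_1$ is that $G_1$ arises from $G_2$ by contracting each gadget $W^v$ and $W^s$; consequently, if $X$ is \emph{gadget-respecting} (each $V(W^v)$ and $V(W^s)$ lies wholly inside or outside $X$), it projects to $X'\subseteq V_1$ with $d_{G_2}(X)=d_{G_1}(X')$ and, since every edge of $F$ joins $B_u$ to $B_v$, with $d_F(X)=d_H(X'\cap V_H)$. Throughout I would record the degrees of $G_2$: $q$ has degree $M$, $t$ has degree $mn$, and every gadget vertex has degree $3$ or $4$, with $4<M<mn$ (which one checks from $M=mn-k$). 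By Lemma \ref{r} this yields the crucial dichotomy $R_{G_2}(X)=M$ when $X$ separates $q$ and $t$ and $R_{G_2}(X)\le 4$ otherwise.

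For the direction ($\Leftarrow$), given a $q\bar t$-set $X'\subseteq V_1$ with $d_{G_1}(X')-d_H(X'\cap V_H)<M$, I would simply blow it up: let $X$ consist of $q$ together with the vertex sets of all gadgets whose contracted image lies in $X'$. Then $X$ is a gadget-respecting $q\bar t$-set, so $d_{G_2}(X)-d_F(X)=d_{G_1}(X')-d_H(X'\cap V_H)<M=R_{G_2}(X)$, and $X$ is violating.

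For ($\Rightarrow$), I would pick, among all violating sets, one $X$ minimizing $d_{G_2}(X)$, and prove that it is a gadget-respecting $q\bar t$-set; projecting it then gives $q\in X'$, $t\notin X'$ and $d_{G_1}(X')-d_H(X'\cap V_H)=d_{G_2}(X)-d_F(X)<R_{G_2}(X)=M$, as required. The key tool is a local exchange across a gadget boundary. From $d_{G_2}(X)-d_F(X)<R_{G_2}(X)\le M$ and $d_F(X)\le m$ I first get $d_{G_2}(X)<M+m<\alpha$ with $\alpha=M+m+1$, hence $d_W(X\cap V(W))<\alpha$ for every gadget $W$; by Lemma \ref{big} at least one of the two sides of $X$ inside $W$ contains no connected component with two vertices of $L(W)\cup P(W)$ (call it a \emph{thin} side). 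Any connected component $C$ of a thin side meets $L(W)\cup P(W)$ in at most one vertex, hence has at most one non-grid edge (each vertex of $L(W)\cup P(W)$ carries exactly one, either a connecting edge of $G_2$ or an edge of $F$), whereas $d_W(C)\ge 3$ by the $3$-edge-connectivity of Lemma \ref{34conn}. A short case analysis then shows that relocating $C$ to the other side of $X$ decreases both $d_{G_2}(X)$ and $d_{G_2}(X)-d_F(X)$ by at least $2$.

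Executing this, if $X$ separates $q$ and $t$ then every such relocation keeps $q\in X$, $t\notin X$, hence keeps $R_{G_2}(X)=M$ and keeps $X$ violating; by minimality no gadget is split, so $X$ is gadget-respecting and we are done. It remains to exclude the case that $X$ does not separate $q$ and $t$, where $R_{G_2}(X)\le 4$: if $X$ is gadget-respecting it projects to $X'\subseteq\{s\}\cup V_H$, for which a direct computation using $d_H(v)\le m$ gives $d_{G_1}(X')-d_H(X'\cap V_H)\ge m>R_{G_2}(X)$, contradicting that $X$ is violating; and if some gadget is split, the same relocation produces a strictly smaller violating set, again a contradiction. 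I expect the main obstacle to be exactly this rounding step — verifying that a thin component is always available to move (via Lemma \ref{big}) and that a single move simultaneously lowers $d_{G_2}(X)$ and preserves the strict inequality $d_{G_2}(X)-d_F(X)<R_{G_2}(X)$, with the demand $R_{G_2}(X)$ correctly tracked across the move.
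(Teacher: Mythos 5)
Your overall scheme coincides with the paper's: both directions pass along the gadget contraction ($G_1$ from $G_2$), the backward direction is the same blow-up, and the forward direction takes a violating set minimizing $d_{G_2}(X)$ and eliminates split gadgets by relocating, across the gadget boundary, a component meeting $L(W)\cup P(W)$ in at most one vertex. The paper packages this as a claim that every component of $W[X]$ and of $W[V_2-X]$ contains two vertices of $L(W)\cup P(W)$ and then invokes Lemma \ref{big}; your ``thin side'' formulation via the contrapositive of Lemma \ref{big}, enabled by $d_{G_2}(X)<M+m<\alpha$, is the same mechanism reorganized. Your accounting of a relocation is also essentially right: a thin component $C$ has $d_W(C)\geq 3$ grid edges, all crossing, and at most one non-grid edge, so the move lowers both $d_{G_2}(X)$ and $d_{G_2}(X)-d_F(X)$ by at least $2$, while $R_{G_2}$ drops by at most $2$ (and stays equal to $M$ in the separating case), so the new set is again violating.

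There is, however, a genuine gap at the degenerate base case: relocation contradicts minimality only if the resulting set is a \emph{proper nonempty} subset of $V_2$. If the violating set $X$ (or, symmetrically, $V_2-X$) is itself a single thin connected piece inside one gadget, i.e.\ $X=C$, then the gadget is ``split'' in your sense, but moving $C$ yields $X'=\emptyset$ (resp.\ $X'=V_2$), and the ``strictly smaller violating set'' you appeal to in the non-separating branch does not exist. Moreover, this case is not excluded by the inequalities you use: a connected $X\subseteq V(W^v)$ containing one vertex of $B_v$ could a priori satisfy $d_{G_2}(X)=4$ and $d_F(X)=1$, giving $d_{G_2}(X)-d_F(X)=3<4=R_{G_2}(X)$, a violating set immune to relocation. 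The paper rules this out with an ingredient missing from your proposal: since $G_2+F$ is eulerian, $d_{G_2}(X)+d_F(X)$ is even for every $X$, so the deficiency $d_{G_2}(X)-d_F(X)$ is even; combined with Lemma \ref{34conn} (which gives $d_{G_2}(X)\geq 4$ unless $X$ is a singleton of $L\cup P$, where a direct check gives deficiency $3-1=2=R_{G_2}(X)$), this shows no violating set lies wholly inside a gadget with at most one special vertex. You need to add this parity-based verification before your induction on $d_{G_2}(X)$ is sound; with it, the rest of your plan goes through and matches the paper's proof.
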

\begin{proof}
First suppose that there is some $q\bar{t}$-set $X\subseteq V_1$ such that $d_{G_1}(X)-d_H(X\cap V_H)<M$. Let $X'\subseteq V_2$ be the set that contains $q \cup \cup_{v \in X\cap V_H}V(W^v)$ and that contains $V(W^s)$ if $X$ contains $s$. Then Lemma \ref{r} yields $d_{G_2}(X')-d_F(X')=d_{G_1}(X)-d_H(X \cap V_H)<M = R_{G_2}(X')$, so $(G_2,F)$ is a negative instance of $AC$.
\medskip

Now suppose that $(G_2,F)$ is a negative instance of CA, so there is some $X\subset V_2$  such that $d_{G_2}(X)-d_F(X) < R_{G_2}(X).$  We choose $X$ among all such sets such that $d_{G_2}(X)$ is minimal.

\begin{claim}\label{yfgyi}
 Let $W\in W^s\cup\{W^v:v \in V_H\}.$ Then each connected component of $W[X]$ or $W[V_2-X]$  contains at least two vertices of $L(W)\cup P(W).$
\end{claim}
\begin{proof}
By symmetry and as $d_{G_2}(X) = d_{G_2}(V_2-X),$ it suffices to prove the statement for $W[X].$ For the sake of a contradiction, suppose that for the vertex set $C$ of a connected component of $W[X]$, we have $|C\cap(L(W)\cup P(W))|\le 1.$ 

First suppose that $X=C$. If $X$ consists of a single vertex $v$ with $d_F(v)=1$, Lemma \ref{r} yields $d_{G_2}(X)-d_F(X) =3-1=2= R_{G_2}(X),$ a contradiction. Otherwise, Lemma \ref{34conn} yields $d_{G_2}(X)\geq 4$ and so, as $d_F(X)\leq 1$ and $G+F$ is eulerian, we obtain by Lemma \ref{r} that $d_{G_2}(X)-d_F(X) \geq 4= R_{G_2}(X),$ a contradiction.

We may hence suppose that $X'=X-C$ is nonempty, so, by Lemma \ref{r} and as $q,t \notin V(W)$, we have $ R_{G_2}(X)- R_{G_2}(X')\leq 4-2=2$. If $C$ consists of a single vertex $v$ with $d_F(v)=0$, we obtain $d_{G_2}(X')-d_F(X')\leq d_{G_2}(X)-2-d_F(X)< R_{G_2}(X)-2\leq R_{G_2}(X')$, a contradiction to the minimality of $X$. Otherwise, Lemma \ref{34conn} yields $d_{G_2}(X)-d_{G_2}(X')\geq d_W(X)-1\geq 4-1=3$ and $d_F(X')-d_F(X)\leq 1$. This yields $d_{G_2}(X')-d_F(X')\leq (d_{G_2}(X)-3)-(d_F(X)-1)< R_{G_2}(X)-2\leq R_{G_2}(X')$, a contradiction to the minimality of $X$.
\end{proof}

We are now ready to show that $V(W) \subseteq X$ or $V(W) \cap X \neq \emptyset$ for every $W\in W^s\cup\{W^v:v \in V_H\}$. Suppose otherwise, then by Claim \ref{yfgyi}, both $W[X]$ and $W[V_2-X]$ have a connected component each containing at least two vertices of $L(W)\cup P(W)$. By Lemmas \ref{big} and \ref{r}, this yields $d_{G_2}(X)-d_F(X)\geq M+m+1-m> M \geq R_{G'}(X)$, a contradiction.

Now let $X^*\subseteq V_1$ be the set of vertices that contains $v$ whenever $V(W^v) \subseteq X$ and $s$ if $V(W^s) \subseteq X$. Observe that $d_{G_2}(X)= d_{G_1}(X^*)\geq 2m$ by construction. Also, observe that $d_F(X)=d_H(X^*\cap V_H)$. By symmetry, we may suppose that $q \in X$. If $X$ is not a $q\bar{t}$-set, Lemma \ref{r} yields $d_{G_2}(X)-d_F(X)\geq d_{G_1}(X^*)-m\geq 2m-m=m>4 \geq R_{G_2}(X)$, a contradiction. If $X^*$ is a $q\bar{t}$-set, by Lemma \ref{r}, we obtain $d_{G_1}(X^*)-d_H(X^*\cap V_H)=d_{G_2}(X)-d_F(X)<R_{G_2}(X)=M$. 
\end{proof}
 \subsection{Reduction for OA}

The following result can be obtained by analogous methods to the proof of Lemma \ref{ausbeul}.

\begin{lemma}\label{notbig}
There is no $X \subseteq V_2$ such that $d_{G_2}(X)<d_F(X)$.
\end{lemma}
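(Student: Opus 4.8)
The plan is to show that for every $X\subseteq V_2$ the inequality $d_{G_2}(X)\geq d_F(X)$ holds, which is precisely the negation of the claimed statement. The key structural observation is that $F$ is an \emph{even} graph in a strong sense: by construction, for each $v\in V_H$ the set of odd-degree vertices of $W^v$ contained in $F$ is exactly $B_v=(L(W^v)-L_m(W^v))\cup(P(W^v)-P_m(W^v))$, and $F$ consists of one edge between $B_u$ and $B_v$ for each $uv\in E_H$. Consequently every edge of $F$ joins two \emph{distinct} augmented grids $W^u,W^v$. I would exploit this by comparing the contribution of each grid $W\in\{W^s\}\cup\{W^v:v\in V_H\}$ to $d_{G_2}(X)$ against its contribution to $d_F(X)$.

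First I would reduce to a localized statement. Since $q$ and $t$ are incident to no edge of $F$, and since every $F$-edge lives between two grids, it suffices to bound the $F$-degree within each grid by its $G_2$-degree. Concretely, I would fix a grid $W$ and set $Y=X\cap V(W)$; the plan is to argue that the number of $F$-edges crossing the cut $X$ that are incident to a vertex of $W$ is at most $d_W(Y)$ plus the number of external $G_2$-edges at $W$ crossing the cut. The cleanest route mirrors the proof of Lemma~\ref{ausbeul}: apply Lemmas~\ref{34conn} and~\ref{big} to control $d_W(Y)$. Because each vertex of $B_v$ is incident to exactly one edge of $F$, the $F$-contribution of $W$ to the cut is bounded by the number of vertices of $B_v$ that lie on the $X$/$(V_2-X)$ boundary, and each such boundary vertex is an odd-degree vertex of the grid, so it is separated from its neighbors by at least one crossing grid-edge. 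This is where the high connectivity of augmented grids pays off.

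The main simplification relative to Lemma~\ref{ausbeul} is that there is no target function $R_{G_2}$ to dominate, so I need only a one-sided comparison $d_{G_2}(X)\geq d_F(X)$, and I can drop all the case analysis involving $R_{G_2}(X)$, $\lambda_{G_2}(q,t)$, and the minimal-cut selection. In particular I would not need the full force of Lemma~\ref{r}; I expect a direct counting argument to go through. Choosing $X$ minimizing $d_{G_2}(X)$ among hypothetical counterexamples lets me assume, as in Claim~\ref{yfgyi}, that no grid is ``split'' across the cut into two parts each carrying two vertices of $L(W)\cup P(W)$, so each grid is either essentially inside $X$ or essentially outside it, up to low-degree exceptional vertices handled by Lemma~\ref{34conn}.

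The step I expect to be the main obstacle is the careful bookkeeping at the \emph{boundary grids}, i.e.\ those grids $W$ for which $V(W)$ meets both $X$ and $V_2-X$ only in low-degree exceptional vertices (a single $v\in L(W)\cup P(W)$, as in the exceptional case of Lemma~\ref{34conn}). In such a case the grid contributes little to $d_{G_2}(X)$ via $d_W(Y)$, and I must verify that the corresponding $F$-edges incident to that lone vertex are matched by the grid's external $G_2$-edges (the edges to $q$, to $t$, or across the perfect matching to another grid), which by construction each such odd-degree vertex possesses. Making this local charging argument airtight — ensuring no $F$-edge is double-counted and that each crossing $F$-edge is paid for by a distinct crossing $G_2$-edge — is the delicate part, but it should follow the template of Lemma~\ref{ausbeul} with the inequalities pointing only one way.
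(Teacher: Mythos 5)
Your plan follows exactly the route the paper intends: the paper gives no standalone proof of Lemma \ref{notbig}, saying only that it ``can be obtained by analogous methods to the proof of Lemma \ref{ausbeul}'' with several simplifications, and your sketch --- a minimal counterexample $X$, the Claim \ref{yfgyi}-style component argument via Lemma \ref{34conn}, Lemma \ref{big} to force each grid onto one side of the cut (using $d_F(X)\le m < M+m+1$), and a final count with the one-sided inequality replacing all the $R_{G_2}$ bookkeeping --- is precisely that argument. The boundary cases you flag as delicate are dispatched by the same arithmetic as in Claim \ref{yfgyi}: removing a component $C$ with at most one vertex of $L(W)\cup P(W)$ deletes at least $3$ crossing grid-edges while creating at most one new crossing external edge and changing $d_F$ by at most $1$, so it strictly decreases $d_{G_2}(X)-d_F(X)$, contradicting minimality, and once every grid lies on one side the conclusion follows since $d_F(X)=d_H(X^*\cap V_H)\le m$ while $d_{G_2}(X)=d_{G_1}(X^*)\ge 2m$ whenever $X^*$ separates two vertices of $V_H$ (and $d_F(X)=0$ otherwise).
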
 

We here prove the following result that allows for a reduction for OA. While this proof does not require any new arguments apart from Lemma \ref{notbig}, we include it here for the sake of self-containment.
The first implication is part of the proof of Nash-Williams of Theorem \ref{well} in \cite{NW} while the arguments for the second implication can be found in a similar form in \cite{ks}.

\begin{lemma}\label{equiv}
$(G_2,F)$ is a negative instance of $OA$ if and only if $(G_2,F)$ is a negative instance of $CA$.
\end{lemma}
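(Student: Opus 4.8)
The plan is to prove the two implications separately, as suggested by the remark that the first direction is essentially due to Nash-Williams and the second to Kir\'aly--Szigeti. Recall that $F$ is \emph{cut-admissible} if $d_{G_2}(X)-d_F(X)\geq R_{G_2}(X)$ for all $X\subseteq V_2$, and \emph{orientation-admissible} if every eulerian orientation $\vec{G_2}+\vec{F}$ of $G_2+F$ restricts to a well-balanced orientation of $G_2$. Since the paper already noted that cut-admissibility implies orientation-admissibility, the contrapositive gives that a negative instance of OA is automatically a negative instance of CA; this is the \emph{easy} direction. The work is therefore in the converse: from a set $X$ violating the cut condition, I must exhibit an eulerian orientation of $G_2+F$ whose restriction to $G_2$ fails to be well-balanced.

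For that converse I would argue as follows. Suppose $(G_2,F)$ is a negative instance of CA, so there is some $X$ with $d_{G_2}(X)-d_F(X) < R_{G_2}(X)$. By Lemma~\ref{r}, $R_{G_2}(X)=2\lfloor\frac{\lambda_{G_2}(s,t)}{2}\rfloor$ for an appropriate witnessing pair $s,t$ with $X$ an $s\bar t$-set. The idea is to orient $F$ so that $X$ becomes as unbalanced as possible in the combined graph: first choose an orientation $\vec{F}$ of $F$ with all its edges crossing $X$ oriented \emph{out} of $X$, so that $d^+_{\vec F}(X)-d^-_{\vec F}(X)=d_F(X)$. By Lemma~\ref{notbig} we have $d_{G_2}(X)\geq d_F(X)$ for every set, so Theorem~\ref{ff} guarantees an orientation $\vec{G_2}$ with $\vec{G_2}+\vec{F}$ eulerian. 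Now in \emph{any} eulerian orientation we have $d^+_{\vec{G_2}+\vec F}(X)=d^-_{\vec{G_2}+\vec F}(X)$; combined with $d^+_{\vec F}(X)-d^-_{\vec F}(X)=d_F(X)$ this forces $d^+_{\vec{G_2}}(X)-d^-_{\vec{G_2}}(X)=-d_F(X)$, hence $d^+_{\vec{G_2}}(X)=\tfrac12(d_{G_2}(X)-d_F(X))$. The violated cut condition $d_{G_2}(X)-d_F(X)<R_{G_2}(X)$ then yields $\lambda_{\vec{G_2}}(s,t)\leq d^+_{\vec{G_2}}(X)<\lfloor\frac{\lambda_{G_2}(s,t)}{2}\rfloor$, so $\vec{G_2}$ is not well-balanced, witnessing that $F$ is not orientation-admissible.

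The role of Lemma~\ref{notbig} is exactly to make Theorem~\ref{ff} applicable: without $d_{G_2}(Y)\geq d_F(Y)$ for all $Y$ (not just the chosen $X$), the prescribed orientation $\vec F$ could fail to extend to an eulerian orientation of $G_2+F$. I expect the main obstacle to be bookkeeping with the floors: the quantity $R_{G_2}(X)$ is defined through the rounding $2\lfloor\lambda_{G_2}(s,t)/2\rfloor$, and to convert a strict cut violation into a strict drop in the arc-connectivity $\lambda_{\vec{G_2}}(s,t)$ below $\lfloor\lambda_{G_2}(s,t)/2\rfloor$ I must track parities carefully, using that the relevant degrees here are even so that $d^+_{\vec{G_2}}(X)$ is a genuine integer and the inequality is preserved through the rounding. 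Once the orientation of the crossing edges of $F$ is fixed to maximize the imbalance across $X$, the remaining arcs can be oriented arbitrarily subject to the eulerian constraint, and the computation above closes the argument.
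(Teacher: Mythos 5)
Your proposal is correct and takes essentially the same route as the paper: the hard direction is the paper's argument verbatim (orient every $F$-edge crossing $X$ out of $X$, use Lemma~\ref{notbig} to verify the hypothesis of Theorem~\ref{ff} for \emph{all} sets, extend to an eulerian orientation, and compute $d^+_{\vec{G_2}}(X)=\frac{1}{2}(d_{G_2}(X)-d_F(X))<\frac{1}{2}R_{G_2}(X)=\lfloor\frac{\lambda_{G_2}(u,v)}{2}\rfloor$, where the parity worry you raise is moot since $R_{G_2}(X)$ is even by definition). The only deviations are cosmetic: you invoke the easy direction as the contrapositive of the known implication ``cut-admissible $\Rightarrow$ orientation-admissible'' where the paper spells out the short computation for self-containment, and the witnessing pair with $R_{G_2}(X)=2\lfloor\frac{\lambda_{G_2}(u,v)}{2}\rfloor$ comes from the definition of $R_{G_2}$ rather than from Lemma~\ref{r}.
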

\begin{proof}

 First suppose that $(G_2,F)$ is a negative instance of OA. Then there is an eulerian orientation $\vec{G_2}+\vec{F}$ of $G_2+F$ such that $\vec{G_2}$ is not well-balanced. This means that there are some  $u,v \in V_2$ such that $\lambda_{\vec{G_2}}(u,v)<\lfloor\frac{\lambda_{G_2}(u,v)}{2}\rfloor$. Therefore there is some $u\bar{v}$-set $X \subset V_2$ such that $d_{\vec{G_2}}^+(X)<\lfloor\frac{\lambda_{G_2}(u,v)}{2}\rfloor$. As $G_2+F$ is eulerian, we obtain that $d_F(X)\geq d_{\vec{G_2}}^-(X)-d_{\vec{G_2}}^+(X)=d_{G_2}(X)-2d_{\vec{G_2}}^+(X)>d_{G_2}(X)-2\lfloor\frac{\lambda_{G_2}(u,v)}{2}\rfloor\geq d_{G_2}(X)-R_{G_2}(X)$, so $(G_2,F)$ is a negative instance of CA.

For the other direction, suppose that $(G_2,F)$ is a negative instance of CA, so there is some $X\subset V_2$ such that  $d_{G_2}(X)-d_F(X)<R_{G_2}(X)$. Let $u \in X$ and $v \in V_2-X$ such that $R_{G_2}(X)=2\lfloor\frac{\lambda_{G_2}(u,v)}{2}\rfloor$. Let $\vec{F}$ be an orientation of $F$ such that all the edges with exactly one endvertex in $X$ are directed away from $X$. By Lemma \ref{notbig} and Theorem \ref{ff}, there is an orientation $\vec{G_2}$ of $G_2$ such that $\vec{G_2}+\vec{F}$ is eulerian. This yields $\lambda_{\vec{G_2}}(u,v)\leq d^+_{\vec{G_2}}(X)=\frac{1}{2}(d_{G_2}(X)+d_F(X))-d^+_{\vec{F}}(X)=\frac{1}{2}(d_{G_2}(X)+d_F(X))-d_{F}(X)=\frac{1}{2}(d_{G_2}(X)-d_F(X))<\frac{1}{2}R_{G_2}(X)=\lfloor\frac{\lambda_{G_2}(u,v)}{2}\rfloor$. We obtain that $\vec{G_2}$ is not well-balanced, so $(G_2,F)$ is a negative instance of OA.
\end{proof}
\subsection{Conclusion}

By Lemmas \ref{int} and \ref{ausbeul}, we obtain that $(G_2,F)$ is a negative instance of $CA$ if and only if $(H,k)$ is a positive instance of AMAXCUT. By Lemma \ref{amax} and as the size of $(G_2,F)$ is polynomial in the size of $(H,k)$, we obtain Theorem \ref{ca}.

By Lemmas \ref{int}, \ref{ausbeul} and \ref{equiv}, we obtain that $(G_2,F)$ is a negative instance of $OA$ if and only if $(H,k)$ is a positive instance of AMAXCUT. By Lemma \ref{amax} and as the size of $(G_2,F)$ is polynomial in the size of $(H,k)$, we obtain Theorem \ref{oa}.

\section{Local arc-connectivity orientation}\label{local}
This section is dedicated to proving Theorem \ref{loc}. We need to consider the following algorithmic problem.

\medskip
{\it Bounded well-balanced orientation (BWBO)}

\smallskip
{\bf Instance} A graph $G=(V, E)$ and two functions $l^+,l^-:V \rightarrow \mathbb{Z}_{\geq 0}$.

\smallskip
{\bf Question} Is there a well-balanced orientation $\vec{G}$ of $G$ such that $d_{\vec{G}}^+(v)\geq l^+(v)$ and $d_{\vec{G}}^-(v)\geq l^-(v)$ for all $v \in V$?
\medskip

The following result is proven in \cite{bikks}.
\begin{lemma}
BWBO is NP-hard.
\end{lemma}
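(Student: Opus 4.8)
The plan is to establish NP-hardness by a reduction from a standard NP-complete problem such as 3-SAT, using the degree bounds $l^+,l^-$ to encode a binary choice at each variable and using the well-balancedness requirement to enforce the clauses. Before the reduction I would record two baseline observations. First, BWBO lies in NP: given a candidate orientation $\vec{G}$, one checks the degree bounds directly and verifies well-balancedness by computing $\lambda_G(u,v)$ and $\lambda_{\vec{G}}(u,v)$ for all pairs $u,v$ via max-flow, all in polynomial time, so that NP-hardness will in fact yield NP-completeness. Second, if the constructed graph $G$ is made $2$-edge-connected, then $\lambda_G(u,v)\geq 2$ for all $u,v$, so every well-balanced orientation satisfies $\lambda_{\vec{G}}(u,v)\geq 1$ for all pairs and is therefore strongly connected; this gives a convenient global backbone, and all finer constraints will come from pairs with larger $\lambda_G(u,v)$.

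For the reduction, given a 3-SAT formula $\varphi$ I would build $G$ from three kinds of gadgets. Each variable $x$ gets a small $2$-edge-connected gadget with two distinguished boundary edges, and I set $l^+,l^-$ on its internal vertices so that, in any orientation meeting the bounds and strongly connecting the gadget, the two boundary edges are forced to point ``consistently'', the two consistent options corresponding to $x=\mathrm{true}$ and $x=\mathrm{false}$. Each clause $C$ gets a gadget attached to the three literal edges of its variables, together with a distinguished pair $s_C,t_C$ whose global connectivity $\lambda_G(s_C,t_C)$ is chosen large enough that the required value $\lfloor\lambda_G(s_C,t_C)/2\rfloor$ can be realized in $\vec{G}$ only if at least one incident literal edge is oriented toward $C$; this encodes the disjunction. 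Finally I would insert auxiliary high-connectivity blocks between gadgets, for instance many parallel edges or augmented grids as in Section~\ref{prem}, so that the only cuts capable of violating well-balancedness are the designed ones at the variable and clause gadgets, preventing unintended interactions.

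The correctness proof splits in the usual way. The easy direction starts from a satisfying assignment, orients each variable gadget according to the assignment and each clause gadget so that a satisfied literal carries the needed flow, and extends this to a full orientation. The genuinely hard direction, and the main obstacle, is to verify that this orientation is \emph{actually} well-balanced: well-balancedness is a condition over all pairs $u,v$ simultaneously, so I must show that every cut $X$ satisfies $d_{\vec{G}}^+(X)\geq\lfloor\lambda_G(u,v)/2\rfloor$ for the relevant pair, not only the cuts I designed. The key step is to argue, using the high internal connectivity of the auxiliary blocks, that any cut either crosses such a block, in which case it is automatically large enough, or else isolates a gadget in a controlled way, reducing the global condition to a finite check at the gadgets. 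Conversely, from any well-balanced orientation meeting the degree bounds I would read off a truth assignment from the forced orientations of the variable gadgets and use the clause inequalities $\lambda_{\vec{G}}(s_C,t_C)\geq\lfloor\lambda_G(s_C,t_C)/2\rfloor$ to certify that every clause is satisfied. Since the whole construction has polynomial size, this proves that BWBO is NP-hard.
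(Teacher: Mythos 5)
This proposal has a genuine gap: it is a plan for a reduction, not a proof, and every step where the difficulty actually lives is deferred rather than carried out. No variable or clause gadget is constructed; the central forcing claims --- that the bounds $l^+,l^-$ force the two boundary edges of a variable gadget into one of two ``consistent'' states, and that $\lambda_{\vec{G}}(s_C,t_C)\geq\lfloor\lambda_G(s_C,t_C)/2\rfloor$ can hold only when a literal edge points toward the clause --- are asserted without any verification. You yourself identify the main obstacle (checking well-balancedness across \emph{all} cuts, not just the designed ones) and then write only that ``the key step is to argue'' it, which is precisely the part that cannot be waved away. Worse, there is a concrete structural tension in your design: you want auxiliary high-connectivity blocks so that only the designed cuts can be tight, but high connectivity between gadgets also creates alternative routes that can satisfy the clause demand $\lambda_{\vec{G}}(s_C,t_C)$ without any literal edge cooperating, while making the clause cut genuinely binding requires $\lambda_G(s_C,t_C)$ to be realized essentially through the three literal edges, i.e., \emph{low} connectivity there. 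Calibrating these opposing requirements, together with the parity effects of the floor (recall that by Theorem \ref{well} a well-balanced orientation always exists, so infeasibility must be manufactured entirely through the interaction of the degree bounds with the cut structure), is the whole content of such a hardness proof, and none of it is present.

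For comparison: the paper does not prove this lemma at all --- it imports it from Bern\'ath, Iwata, Kir\'aly, Kir\'aly and Szigeti \cite{bikks}, where it is a published result with its own nontrivial construction. So a blind attempt here is attempting a research-level theorem from scratch, and a sketch at this level of abstraction cannot substitute for the citation or for an actual construction with a full cut analysis. Your two baseline observations (membership in NP via pairwise max-flow computations, and strong connectivity of well-balanced orientations of $2$-edge-connected graphs) are correct but peripheral. To make the proposal into a proof you would need explicit gadgets, an exact computation of $\lambda_G(u,v)$ for all relevant pairs in the constructed graph (in the spirit of Lemma \ref{r} in this paper, whose entire purpose is to make such values controllable), and a case analysis showing every cut meets its requirement --- or, more sensibly, you should simply cite \cite{bikks} as the paper does.
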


We are now ready to give the reduction for Theorem \ref{loc}.

\begin{proof}(of Theorem \ref{loc})

We prove this by a reduction from BWBO. Let $(G=(V,E),l^+,l^-)$ be an instance of BWBO. We add two vertices $x$ and $y$ and for every $v \in V$, we add $d_G(v)$ edges linking $v$ and each of $x$ and $y$. We denote this graph by $G'=(V',E')$. Observe that $|V'|=|V|+2$ and $|E'|=5|E|$, so the size of $G'$ is polynomial in the size of $G$. We now define $r:(V')^2 \rightarrow \mathbb{Z}_{\geq 0}$ by $r(u,v)=\lfloor\frac{\lambda_G(u,v)}{2}\rfloor$ for all $u,v \in V^2$, $r(x,v)=d_G(v)+l^-(v),r(v,x)=0,r(y,v)=0$ and $r(v,y)=d_G(v)+l^+(v)$ for all $v \in V$ and $r(x,y)=2|E|$.

We prove that $(G',r)$ is a positive instance of LACO if and only if $(G,l^+,l^-)$ is a positive instance of BWBO. First suppose that $(G',r)$ is a positive instance of LACO, so there is an orientation $\vec{G'}$ of $G'$ such that $\lambda_{\vec{G'}}(u,v)\geq r(u,v)$ for all $u,v \in (V')^2$. Observe that $d_{G'}(x)=r(x,y)=d_{G'}(y)$, so $x$ is a source and $y$ is a sink in $\vec{G'}$. We show that $\vec{G}$, the restriction of $\vec{G'}$ to $G$, is a well-balanced orientation of $G$ such that $d_{\vec{G}}^+(v)\geq l^+(v)$ and $d_{\vec{G}}^-(v)\geq l^-(v)$ for all $v \in V$. As $x$ is a source and $y$ is a sink in $\vec{G'}$, for any $u,v \in V^2$, we have $\lambda_{\vec{G}}(u,v)=\lambda_{\vec{G'}}(u,v)\geq r(u,v)=\lfloor\frac{\lambda_G(u,v)}{2}\rfloor$, so $\vec{G}$ is well-balanced. Further, for any $v \in V$, we have $d_{\vec{G}}^-(v)=d_{\vec{G'}}^-(v)-d_{\vec{G'}}(x,v)\geq \lambda_{\vec{G'}}(x,v)-d_{\vec{G'}}(x,v)\geq r(x,v)-d_{\vec{G'}}(x,v)=d_G(v)+l^-(v)-d_G(v)=l^-(v)$. Similarly, $d_{\vec{G}}^+(v)\geq l^+(v)$, so $(G,l^+,l^-)$ is a positive instance of $BWBO$.

Now suppose that  $(G,l^+,l^-)$ is a positive instance of $BWBO$, so there is a well-balanced orientation $\vec{G}$ of $G$ such that $d_{\vec{G}}^+(v)\geq l^+(v)$ and $d_{\vec{G}}^-(v)\geq l^-(v)$ for all $v \in V$. We complete this to an orientation $\vec{G'}$ of $G'$ by orienting all edges incident to $x$ away from $x$ and  all edges incident to $y$ toward $y$. As $\vec{G}$ is well-balanced, we have $\lambda_{\vec{G'}}(u,v)=\lambda_{\vec{G}}(u,v)\geq \lfloor\frac{\lambda_G(u,v)}{2}\rfloor=r(u,v)$ for all $u,v \in V^2$. By construction, we have $\lambda_{\vec{G'}}(x,y)=\sum_{v \in V}d_G(v)=2|E|=r(x,y)$. For any $v \in V$, we have $d_G(v)$ arc-disjoint $xv$-paths of length $1$. Further, for every arc $uv$ entering $v$ in $\vec{G}$, we have a path $xuv$. As all these paths can be chosen to be arc-disjoint, we obtain that $\lambda_{\vec{G'}}(x,v)\geq d_{\vec{G'}}(x,v)+d^-_{\vec{G}}(v)\geq d_G(v)+l^-(v)=r(x,v)$. Similarly, $\lambda_{\vec{G'}}(v,y)\geq r(v,y)$, so $(G',r)$ is a positive instance of LACO.
\end{proof}
\section*{Acknowledgement}
I wish to thank Zolt\'an Szigeti. He first suggested the problems. Later, he carefully proofread the article and proposed some simplifications.

\end{document}